\newtheorem{theorem}{Theorem}[section]
\newtheorem{corollary}[theorem]{Corollary}
\newtheorem{lemma}[theorem]{Lemma}
\newtheorem{proposition}[theorem]{Proposition}
\newtheorem{definition-proposition}[theorem]{Definition-Proposition}
\theoremstyle{definition}
\newtheorem{definition}[theorem]{Definition}
\newtheorem{example}[theorem]{Example}
\newcommand{\Ext}{\operatorname{Ext}\nolimits}
\newcommand{\Hom}{\operatorname{Hom}\nolimits}
\newcommand{\End}{\operatorname{End}\nolimits}
\renewcommand{\mod}{\mathsf{mod}\hspace{.01in}}
\newcommand{\add}{\mathsf{add}\hspace{.01in}}
\newcommand{\RHom}{\mathbf{R}\strut\kern-.2em\operatorname{Hom}\nolimits}
\numberwithin{equation}{section}
\def\Im{\mathop{\rm Im}\nolimits}
\def\Ker{\mathop{\rm Ker}\nolimits}
\def\id{\mathop{\rm id}\nolimits}
\def\pd{\mathop{\rm pd}\nolimits}
\begin{document}
\title{Three results for $\tau$-rigid modules}
\thanks{2000 Mathematics Subject Classification: 16G10, 16E10.}
\thanks{Keywords:  $\tau$-rigid module, projective dimension, tilted algebra}
\thanks{The authors are supported by NSFC(Nos.11571164 and 11671174) and NSF for Jiangsu
Province (BK20130983)). }

\author{Zongzhen Xie}
\address{Z. Xie: Department of Mathematics, Nanjing University,
Nanjing 210093, P. R. China}
\email{xiezongzhen3@163.com}
\author{Libo Zan}
\address{L. Zan: School of Mathematics and Statistics, NUIST,
Nanjing, 210044, P. R. China} \email{zanlibo@nuist.edu.cn}
\author{Xiaojin Zhang}
\address{X. Zhang: School of Mathematics and Statistics, NUIST,
Nanjing, 210044, P. R. China} \email{xjzhang@nuist.edu.cn}

\maketitle
\begin{abstract}
 $\tau$-rigid modules are essential in the $\tau$-tilting theory introduced by Adachi, Iyama and Reiten.
In this paper, we give equivalent conditions for Iwanaga-Gorenstein
algebras with self-injective dimension at most one in terms of
$\tau$-rigid modules. We show that every indecomposable module over
iterated tilted algebras of Dynkin type is $\tau$-rigid. Finally, we
give a $\tau$-tilting theorem on homological dimension which
is an analog to that of classical tilting modules.
\end{abstract}

\section{Introduction}

In 2014, T. Adachi, O. Iyama and I. Reiten \cite{AIR} introduced
$\tau$-tilting theory to generalize the classical tilting theory.
$\tau$-tilting theory is closely related to silting theory \cite{AI,BZ} and cluster
tilting theory \cite{KR, IY, BMRRT} which are popular in the recent
years. Therefore, $\tau$-tilting theory has attracted widespread
attention. For the latest general results on $\tau$-tilting theory,
we refer to \cite{DIJ, DIRRT, EJR, IJY, IZ1,IZ2, J,K,W} and
references there.

 Note that $\tau$-rigid modules are important
 objects and tools in the $\tau$-tilting theory. It is interesting to study the
 properties of $\tau$-rigid modules and find the indecomposable
 $\tau$-rigid modules for a given algebra. For the recent development of this topic, we refer to \cite{A1,A2,DIP,HZ,Mi,Z1,Z2,Zi1,Zi2} and
 so on. In this paper, we
 also focus on the properties of $\tau$-rigid modules.

For an algebra $A$, denote by $\mathrm{mod}A$ the category of finitely generated right $A$-modules. Recall that an algebra $A$ is Iwanaga-Gorenstein, that is, $\id_{A}
A< \infty$ and $\id_{A^{\rm op}} A< \infty$. In this case, $\id_{A}
A=\id_{A^{\rm op}} A$. Our first main result gives some
new equivalent conditions for an Iwanaga-Gorenstein algebra $A$ with
$\id_A A\leq 1$ in terms of $\tau$-rigid modules. We remark that
this result was inspired by Osamu Iyama and Yingying Zhang.

\begin{theorem}\label{0.1}(Theorems \ref{2.2},\ref{2.4}) For an algebra $A$, the following are equivalent
 \begin{enumerate}[\rm(1)]
 \item $A$ is Iwanaga-Gorenstein with $\mathrm{id}_{A}A\leq 1$.
 \item Every classical cotilting module in $\mathrm{mod}A$ is a classical tilting module.
\item $\mathbb{D}A$ is a $\tau$-rigid module in $\mathrm{mod}A$.
\item $A$ is a $\tau^{-1}$-rigid module in $\mathrm{mod}A$.
\end{enumerate}
\end{theorem}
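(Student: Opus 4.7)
My plan is to prove the cycle $(1) \Rightarrow (2) \Rightarrow (3) \Rightarrow (1)$, and then obtain $(1) \Leftrightarrow (4)$ by applying the same chain to $A^{\op}$ via the $k$-duality $\DD$ (which sends $A \in \mathrm{mod}A$ to $\DD A \in \mathrm{mod}A^{\op}$ and interchanges $\tau^{-1}$ with $\tau$); since (1) is symmetric under passing to $A^{\op}$, condition (4) for $A$ corresponds to (3) for $A^{\op}$. The technical heart of the argument is the reformulation that $\DD A$ is $\tau$-rigid in $\mathrm{mod}A$ if and only if $\pd_A \DD A \leq 1$. Starting from a minimal projective presentation $P_1 \xrightarrow{f} P_0 \to \DD A \to 0$, the standard criterion says $\DD A$ is $\tau$-rigid iff the induced map $\Hom_A(P_0, \DD A) \to \Hom_A(P_1, \DD A)$ is surjective; using the natural isomorphism $\Hom_A(P, \DD A) \cong \DD P$ for projective $P$, this map is identified with $\DD f \colon \DD P_0 \to \DD P_1$, and by exactness of $\DD$ its surjectivity is equivalent to injectivity of $f$, i.e.\ (by minimality of the presentation) to $\pd_A \DD A \leq 1$. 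Via the standard $\DD$-duality exchanging projective and injective resolutions, this is further equivalent to $\id_{A^{\op}} A \leq 1$.

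With this fact in hand, the cycle is straightforward. For $(1) \Rightarrow (2)$, under Iwanaga-Gorenstein with $\id_A A \leq 1$ the classes $\{M : \pd M \leq 1\}$ and $\{M : \id M \leq 1\}$ coincide, so any classical cotilting $C$ has $\pd C \leq 1$ and, combined with its other defining properties, is classical tilting. The implication $(2) \Rightarrow (3)$ is immediate: $\DD A$ is always a classical cotilting module (injective, self-$\Ext^{\geq 1}$-orthogonal, with $|\add \DD A|$ equal to the number of simples of $A$), hence by (2) it is also classical tilting, which gives $\pd_A \DD A \leq 1$ and thus (3). For $(3) \Rightarrow (1)$, $\tau$-rigidity of $\DD A$ first yields $\pd_A \DD A \leq 1$ and hence $\id_{A^{\op}} A \leq 1$; since $\DD A$ automatically satisfies $\Ext^{\geq 1}(\DD A, \DD A) = 0$ and has the correct summand count, $\DD A$ is a classical tilting module, and the associated short exact sequence $0 \to A \to T^0 \to T^1 \to 0$ with $T^0, T^1 \in \add \DD A$ exhibits an injective copresentation of $A$ of length at most one, giving $\id_A A \leq 1$ and completing (1).

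The main obstacle I expect is the last implication $(3) \Rightarrow (1)$: on its own, the $\tau$-rigidity hypothesis only controls one side, namely $\pd_A \DD A \leq 1$, equivalent to $\id_{A^{\op}} A \leq 1$, so the complementary bound $\id_A A \leq 1$ must be extracted from the tilting short exact sequence for $\DD A$. The argument therefore hinges on verifying that the conditions automatic for $\DD A$ suffice to upgrade $\pd_A \DD A \leq 1$ to the statement that $\DD A$ is a bona fide classical tilting module.
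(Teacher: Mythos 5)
Your proof is correct, and it takes a genuinely different (and in some ways more self-contained) route than the paper. The paper factors the theorem through two results: Theorem~2.2, which characterizes Iwanaga--Gorensteinness via general (Miyashita) cotilting modules using the contravariantly finite subcategory $\mathcal{X}_T={}^{\perp}T$, and Theorem~2.4, which proves $(1)\Leftrightarrow(3)\Leftrightarrow(4)$ by combining the Ext-vanishing criterion $\mathrm{Hom}_A(X,\tau Y)=0\Leftrightarrow\mathrm{Ext}^1_A(Y,\mathrm{Fac}X)=0$ (Proposition~2.5) with the fact that a faithful $\tau$-tilting module is classical tilting (Proposition~2.3(2)), and then invoking the symmetry $\id_A A\leq 1\Leftrightarrow\id_{A^{\op}}A\leq 1$. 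You instead crystallize everything into the single equivalence ``$\DD A$ is $\tau$-rigid iff $\pd_A\DD A\leq 1$,'' proved directly from the projective-presentation criterion for $\tau$-rigidity via $\Hom_A(P,\DD A)\cong\DD P$. This has two advantages: your $(1)\Rightarrow(2)$ works with \emph{classical} cotilting modules directly via the coincidence of the classes $\{\pd\leq 1\}$ and $\{\id\leq 1\}$ over a $1$-Gorenstein algebra, matching the literal wording of condition (2) of Theorem~\ref{0.1} without the detour through general cotilting modules; and your $(3)\Rightarrow(1)$ obtains \emph{both} bounds $\id_{A^{\op}}A\leq 1$ (from $\pd_A\DD A\leq 1$) and $\id_A A\leq 1$ (from the tilting coresolution $0\to A\to T^0\to T^1\to 0$ with $T^i$ injective) directly, so you never need to cite the symmetry of the condition $\id A\leq 1$, which the paper's proof of Theorem~2.4 does use. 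Two small points worth making explicit when you write this up: the inference from ``$\pd_A\DD A\leq 1$, $\Ext^1_A(\DD A,\DD A)=0$, $|\DD A|=|A|$'' to ``$\DD A$ is classical tilting'' rests on the Bongartz-completion argument (equivalently, the paper's remark that Definition~2.1(3) is equivalent to $|T|=|A|$ when $n=1$), and in the technical lemma you should note that the criterion requires the projective presentation to be \emph{minimal}, since otherwise $\DD\Tr M$ acquires spurious injective summands.
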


We are also interested in algebras $A$ satisfying every indecomposable module in $\mathrm{mod}A$ is $\tau$-rigid.
Easy examples of such algebras are hereditary algebras of Dynkin type. We aim to find more examples in this paper.
Recall that  Assem and Happel \cite{AsH} introduced the following notation of iterated tilted algebras of Dynkin type as a generalization of tilted algebras of Dynkin type \cite{HR}. Let $Q$ be a
finite, connected, and acyclic quiver. An algebra $A_m$ ($m\geq1$) is called an iterated
tilted algebra of type $Q$ if (1) $A_0=KQ$, (2) $T_i$ is a splitting
classical tilting module in $\mathrm{mod}A_i$ and (3) $A_{i+1}=\End_{A_i}T_i$ are satisfied, where $0\leq i \leq m-1$. Our second main result is the following.

\begin{theorem}\label{0.4}(Theorem \ref{4.7})
Let $B$ be an iterated tilted algebra of Dynkin type. Then every
indecomposable module in $\mathrm{mod}B$ is $\tau$-rigid.
\end{theorem}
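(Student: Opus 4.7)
My plan is to prove Theorem~\ref{0.4} by induction on the depth $m$ of the iterated tilt.

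For the base case $m=0$, $B = A_0 = KQ$ is hereditary of Dynkin type. Over a hereditary algebra, an indecomposable module $M$ is $\tau$-rigid if and only if $\Ext^1_B(M,M)=0$, because the Auslander--Reiten formula together with Serre duality in $D^b(\mathrm{mod}\,B)$ (where the Serre functor is $-\otimes^{\mathbf{L}}_B \mathbb{D}B$ and $\tau M = \nu M[-1]$) reduces $\Hom_B(M,\tau M)$ to an $\Ext^1$-group between indecomposable summands. Since $Q$ is Dynkin, the Tits form is positive definite, every indecomposable has a positive real root as dimension vector, and is therefore rigid. Hence every indecomposable $KQ$-module is $\tau$-rigid.

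For the inductive step, assume every indecomposable $A_m$-module is $\tau_{A_m}$-rigid, and let $T = T_m$ be a splitting classical tilting $A_m$-module with $A_{m+1} = \End_{A_m}(T)$. The Brenner--Butler theorem provides torsion pairs $(\mathcal{T}(T),\mathcal{F}(T))\subset\mathrm{mod}\,A_m$ and $(\mathcal{X}(T),\mathcal{Y}(T))\subset\mathrm{mod}\,A_{m+1}$, together with equivalences $\Hom_{A_m}(T,-)\colon\mathcal{T}(T)\xrightarrow{\sim}\mathcal{Y}(T)$ and $\Ext^1_{A_m}(T,-)\colon\mathcal{F}(T)\xrightarrow{\sim}\mathcal{X}(T)$. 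Splittingness of $T$ forces every indecomposable $N\in\mathrm{mod}\,A_{m+1}$ to lie in exactly one of $\mathcal{X}(T)$ or $\mathcal{Y}(T)$, and to be the image of a unique indecomposable $A_m$-module $M\in\mathcal{F}(T)\cup\mathcal{T}(T)$, which is $\tau_{A_m}$-rigid by the inductive hypothesis.

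The heart of the induction is showing that these equivalences preserve $\tau$-rigidity. For $M\in\mathcal{T}(T)$, I would choose a projective presentation $Q_1\to Q_0\to M\to 0$ with $Q_i\in\mathrm{add}\,T$ (possible because $T$ generates $\mathcal{T}(T)$), apply $\Hom_{A_m}(T,-)$ to obtain a minimal projective presentation of $N=\Hom_{A_m}(T,M)$ in $\mathrm{mod}\,A_{m+1}$, and translate the surjectivity $\Hom_{A_m}(Q_0,M)\twoheadrightarrow\Hom_{A_m}(Q_1,M)$ characterizing $\tau_{A_m}$-rigidity of $M$ into the analogous surjectivity for $N$; the case $M\in\mathcal{F}(T)$ is treated dually via an injective copresentation of $M$ in $\mathrm{add}(\mathbb{D}A_m)$ and the functor $\Ext^1_{A_m}(T,-)$. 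These transfer statements should follow from (or be proved in parallel with) the paper's third main result, the $\tau$-tilting version of the classical tilting theorem on homological dimension.

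The principal obstacle is the $\mathcal{F}(T)$-branch: $\Ext^1_{A_m}(T,-)$ is considerably less well-behaved than $\Hom_{A_m}(T,-)$, so extracting a controlled projective presentation of $\Ext^1_{A_m}(T,M)$ in $\mathrm{mod}\,A_{m+1}$ from injective data on the $A_m$-side requires careful tracking of injective and projective summands, together with a delicate application of the Auslander--Reiten formulas in the not-necessarily-hereditary $A_m$.
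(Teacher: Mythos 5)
Your overall induction on $m$ matches the paper, and the base case (hereditary Dynkin, every indecomposable is rigid) is fine, but the inductive step as you sketch it has genuine gaps and does not follow the paper's route.

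The paper's mechanism for the inductive step is not to transfer $\tau$-rigidity across the Brenner--Butler equivalences by presentations. Instead it works directly with the almost split sequence $0\to \tau N\to E\to N\to 0$ in $\mathrm{mod}\,A_{m+1}$, invoking \cite[VI, Prop.~5.2 and Lemma~5.3]{AsSS} (Propositions~\ref{4.5} and~\ref{4.6}): for a splitting tilting module this sequence either lies entirely in $\mathcal{Y}(T)$, lies entirely in $\mathcal{X}(T)$, or is a \emph{connecting sequence} of the form $0\to\Hom_{A_m}(T,I)\to\ast\to\Ext^1_{A_m}(T,P)\to 0$. In the first two cases the equivalence carries the whole almost split sequence to an almost split sequence in $\mathrm{mod}\,A_m$, so $\tau_{A_{m+1}}N$ is identified with the $\tau_{A_m}$-translate of the corresponding $A_m$-module, and $\Hom(N,\tau N)=0$ follows from the inductive hypothesis. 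In the connecting case $N\in\mathcal{X}(T)$ and $\tau N\in\mathcal{Y}(T)$, so $\Hom(N,\tau N)=0$ for free. This is cleaner precisely because it never needs the equivalences to ``preserve $\tau$-rigidity'' as functors on objects.

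Your plan runs into three concrete problems. First, you omit the connecting-sequence case entirely: although each indecomposable $N$ lies in exactly one of $\mathcal{X}(T),\mathcal{Y}(T)$, its translate $\tau N$ may lie in the other class, so $N$ and $\tau N$ are not images of a single $A_m$-module under one and the same BB functor, and your transfer argument has nothing to apply to. Second, your proposed surjectivity criterion is stated for an $\mathrm{add}\,T$-presentation $Q_1\to Q_0\to M\to 0$, but the characterization ``$M$ is $\tau$-rigid iff $\Hom(P_0,M)\to\Hom(P_1,M)$ is surjective'' (from \cite{AIR}) requires the \emph{minimal} projective presentation of $M$ in $\mathrm{mod}\,A_m$; replacing it with an $\mathrm{add}\,T$-resolution destroys the iff, and in any case $\Hom_{A_m}(T,-)$ sends only $\mathrm{add}\,T$, not arbitrary projectives of $A_m$, to projectives of $A_{m+1}$, so the image need not be a (let alone minimal) projective presentation of $N$. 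Third, you yourself flag the $\mathcal{F}(T)$-branch as unresolved; that branch is unavoidable, and the paper disposes of it in the same stroke as the $\mathcal{Y}(T)$-branch via Proposition~\ref{4.6}(2). The missing idea is exactly Propositions~\ref{4.5} and~\ref{4.6}: analyze the almost split sequence of $N$, not a presentation of $N$.
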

For a classical tilting module $T$ in $\mathrm{mod} A$ with $B=\End_A T$,
by using the tilting theorem of Brenner and Butler \cite{BB}, one gets that the homological dimension of
$N\in \mathrm{Fac} T$ gives an upper bound of the homological dimension of
$\Hom_{A}(T,N)$, where $\mathrm{Fac} T$ (resp. $ \mathrm{Sub} T$) is the
subcategory of $\mathrm{mod}A$ consisting of modules $N$ generated (resp. cogenerated)
by $T$. It is natural to ask: Is there a similar
result for $\tau$-tilting modules? We give a positive answer to this
question and get our third main result. We should remark that Buan
and Zhou have studied the global dimension of $2$-term silting
complexes in [BZ].

\begin{theorem}\label{0.3}(Theorems \ref{3.2}, \ref{3.a}) Let $A$ be an algebra, $T$ be a $\tau$-tilting module in $\mod A$,
$B=\mathrm{End}_{A} T$ and $C=\mathrm{End}_{A} \tau T^{\rm op}$.
 \begin{enumerate}[\rm(1)]
\item    For any $M\in \mathrm{Fac}T$ with $\pd_{A} M\leq 1$, $\mathrm{pd}_{B}\mathrm{Hom}_{A}(T,M)\leq \mathrm{pd}_{A}M$
holds.
\item    For any $M\in \mathrm{Fac}T$ with $\Ext_{A}^{i}(T,M\oplus T)=0$
for any $i\geq 1$, $\mathrm{pd}_{B}\mathrm{Hom}_{A}(T, M)\leq
\mathrm{pd}_{A}M$ holds.
\item  For any $N\in \mathrm{Sub}\tau T$ with $\id_{A} N\leq 1$, $\mathrm{pd}_{C}
\mathrm{Hom}_{A}(N,\tau T)\leq \mathrm{id}_{A}N$ holds.
\item   For any $N\in \mathrm{Sub}\tau T$ with $\Ext_{A}^{i}(N\oplus \tau T,\tau T)=0$
for any $i\geq 1$, $\mathrm{pd}_{C}\mathrm{Hom}_{A}(N,\tau T)\leq
\mathrm{id}_{A}N$ holds.

\end{enumerate}
\end{theorem}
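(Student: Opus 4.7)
My plan proceeds as follows. Statements (3) and (4) are dual to (1) and (2) respectively under the standard $k$-linear duality $\mathbb{D}=\Hom_k(-,k):\mod A\to\mod A^{\op}$: this contravariant exact functor exchanges $\Fac$ with $\Sub$ and $\pd$ with $\id$, and under it the $\tau$-tilting module $T\in\mod A$ corresponds to an object of $\mod A^{\op}$ whose role is played by $\tau T$ via the standard compatibility $\mathbb{D}\tau\cong\tau^{-1}\mathbb{D}$, carrying $B=\End_A T$ to $C=\End_A(\tau T)^{\op}$. I therefore focus on (1) and (2).

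For (1), the crux is to produce, for each $M\in\Fac T$ with $\pd_A M\le 1$, a short exact sequence
\begin{equation*}
0\to T_1\to T_0\to M\to 0,\qquad T_0,T_1\in\add T.
\end{equation*}
Granted this, applying $\Hom_A(T,-)$ and using the standard equivalence of $\tau$-rigidity of $T$ with $\Ext^1_A(T,\Fac T)=0$ yields $0\to\Hom_A(T,T_1)\to\Hom_A(T,T_0)\to\Hom_A(T,M)\to 0$ in $\mod B$, a short exact sequence whose first two terms are projective (as $T_i\in\add T$); hence $\pd_B\Hom_A(T,M)\le 1=\pd_A M$. The case $\pd_A M=0$ reduces to the fact that a projective $M\in\Fac T$ splits off any epimorphism from $\add T$, so $M\in\add T$.

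To construct the $\add T$-presentation, I start from a projective resolution $0\to P_1\to P_0\to M\to 0$ and invoke the AIR fact that over a $\tau$-tilting algebra each indecomposable projective $P$ admits an $\add T$-coresolution $0\to P\to U^0_P\to U^1_P\to 0$. Pushouts first along $P_1\to U^0_{P_1}$ and then along $P_0\to U^0_{P_0}$ give a short exact sequence $0\to U^0_{P_0}\to R\to U^1_{P_1}\to 0$ whose ends lie in $\add T$; $\tau$-rigidity forces $\Ext^1_A(U^1_{P_1},U^0_{P_0})=0$, so this sequence splits and $R\in\add T$, and a diagram chase through the two pushout squares then extracts the desired $\add T$-presentation of $M$.

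For (2), set $n=\pd_A M$ and proceed by induction on $n$, with base $n=0$ handled as in (1). For the inductive step, take the minimal right $\add T$-approximation $T_0\twoheadrightarrow M$ with kernel $K$. The long exact $\Ext$-sequence for $0\to K\to T_0\to M\to 0$ together with the strong hypothesis $\Ext^{\ge 1}_A(T,T\oplus M)=0$ forces $\Ext^{\ge 1}_A(T,K)=0$ and produces the short exact sequence $0\to\Hom_A(T,K)\to\Hom_A(T,T_0)\to\Hom_A(T,M)\to 0$ in $\mod B$. One then argues that $K\in\Fac T$ and $\pd_A K\le n-1$, so the inductive hypothesis applied to $K$ gives $\pd_B\Hom_A(T,K)\le n-1$, from which $\pd_B\Hom_A(T,M)\le n$ follows. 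The main obstacle is this last step: neither $K\in\Fac T$ nor $\pd_A K\le n-1$ is automatic under mere $\tau$-rigidity, and it is precisely the infinite Ext-vanishing in (2) that should underwrite them, pushing $T$ into the Wakamatsu-tilting regime where the analog of the Brenner--Butler homological estimate holds.
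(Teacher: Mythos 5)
Your plan for parts (1) and (2) contains two genuine gaps, and the duality reduction for (3),(4) is more delicate than you indicate.

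For (1), the ``AIR fact'' you invoke is false: an exact sequence $0\to P\to U^0_P\to U^1_P\to 0$ with $U^0_P,U^1_P\in\add T$ for each indecomposable projective $P$ would give $A\in\Sub T$, which is equivalent to $T$ being \emph{faithful}, i.e.\ a classical tilting module (Proposition~\ref{2.b}). For a non-faithful $\tau$-tilting module it fails; e.g.\ in Example~\ref{2.2}'s algebra $A=K(1\rightleftarrows 2)/(\alpha\beta,\beta\alpha)$ with $T=S_2\oplus P_2$, every nonzero $A$-linear map $P_1\to T$ kills $\soc P_1=S_2$, so $P_1\notin\Sub T$. The left $\add T$-approximation of $A$ exists (this is what AIR prove) but need not be a monomorphism, which is exactly where your pushout scheme breaks. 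The paper instead takes the \emph{right} $\add T$-approximation $T_0\twoheadrightarrow M$, uses Wakamatsu's Lemma together with $\,{}^{\perp_0}(\tau T)=\Fac T$ (valid for $\tau$-\emph{tilting} $T$, by AIR) to place the kernel $L$ in $\Fac T$, and then shows $L$ is Ext-projective in $\Fac T$ by applying $\Hom_A(-,N)$ and using $\pd_A M\le 1$; Ext-projectives in $\Fac T$ are exactly $\add T$, which closes the argument without ever needing $A\in\Sub T$.

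For (2), you correctly flag that the induction does not close, but your diagnosis is half wrong: $K\in\Fac T$ \emph{is} automatic here (same Wakamatsu/AIR point, ${}^{\perp_0}(\tau T)=\Fac T$), whereas $\pd_A K\le n-1$ genuinely can fail — from $0\to K\to T_0\to M\to 0$ one only gets $\pd_A K\le\max(\pd_A T,\,\pd_A M-1)$, and $\pd_A T$ is unconstrained. The paper's proof does not try to bound $\pd_A K$ at all. It iterates the right $\add T$-approximations to build $\cdots\to T_n\to\cdots\to T_0\to M\to 0$ with all kernels $L_i\in\Fac T$, uses the hypothesis $\Ext^{\ge1}_A(T,T\oplus M)=0$ to push $\Ext^{\ge1}_A(T,L_i)=0$ along the resolution, and then shows termination at step $t=\pd_A M$ by a dimension shift \emph{in the first variable}: $\Ext^1_A(L_t,L_{t+1})\cong\cdots\cong\Ext^{t+1}_A(M,L_{t+1})=0$, so $0\to L_{t+1}\to T_t\to L_t\to 0$ splits. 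Applying $\Hom_A(T,-)$ to the resulting finite resolution stays exact because $\Ext^1_A(T,L_i)=0$.

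For (3),(4), the reduction by $\mathbb D$ is not as clean as stated: $\mathbb D$ turns a $\tau$-tilting module into a $\tau^{-1}$-tilting one, and $\tau T$ is only $\tau^{-1}$-\emph{rigid} (if $T$ has projective summands, $|\tau T|<|A|$), so you are not literally in the setting of (1),(2) after dualizing. The paper instead runs a parallel dual argument using part (2) of its Wakamatsu Lemma (left $\add\tau T$-approximations) rather than a formal duality reduction.
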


The paper is organized as follows:

In Section 2, we study $\tau$-rigid modules over
Iwanaga-Gorenstein algebras and show Theorem \ref{0.1}. In Section
3, we study the indecomposable $\tau$-rigid modules over iterated tilted
algebras of Dynkin type and show Theorem \ref{0.4}. In Section 4, we give the
$\tau$-rigid (resp. $\tau^{-1}$-rigid) version of Wakamastu's Lemma
and then give an upper bound for some special modules over the
endomorphism ring of $\tau$-tilting (resp. $\tau^{-1}$-tilting)
modules.

Throughout this paper, all algebras are finite dimensional algebras over an algebraically closed field
 $K$ and $\mathbb{D}=\mathrm{Hom}_{K} (-,K)$ is the standard duality.
\section{Gorenstein algebras and $\tau$-rigid modules}
In this section, we aim to study Iwanaga-Gorenstein algebras in
terms of $\tau$-rigid modules.

For an algebra $A$, denote by $\mathrm{gl.dim}A$ the global dimension of $A$. For a right $A$-module $M$, denote by $\mathrm{pd}_A M$ (resp. $\mathrm{id}_A M$) the projective dimension (resp. injective dimension) of $M$, denote by ${\mathrm{add}_A M}$ the subcategory
of direct summands of finite direct sums of $M$ and denote by $|M|$ the number of pairwise
nonisomorphic indecomposable summands of $M$. Firstly, we recall
the definition of tilting (resp. cotilting) modules, see \cite{M}
for details.

\begin{definition}\label{2.a} A module $T\in\mathrm{mod}A$ is called a {\it tilting} module, if it satisfies
 \begin{enumerate}[\rm(1)]
\item $\mathrm{pd}_A T \leq n $.
\item $\mathrm{Ext} _{A}^{i} (T,T) =0$ for all $i\geq 1$.
\item There exists an exact sequence $0 \rightarrow A \rightarrow T_{0} \rightarrow T_{1}\rightarrow \cdots \rightarrow T_{n} \rightarrow 0$, for all $T_{i} \in  \mathrm{add} T$, $0 \leq i \leq n$.
\end{enumerate}
\end{definition}

In particular, we call $T$ in Definition \ref{2.a} a {\it classical
tilting} module whenever $n=1$. In this case, Definition 2.1(3) is equivalent to $|T|=|A|$. Dually, one can define {\it
cotilting} modules and {\it classical cotilting} modules.

 We also need the following
definitions in \cite{AIR}.

\begin{definition}
 \begin{enumerate}[\rm(1)]
\item We call $T \in \mathrm{mod}A$ {\it $\tau$-rigid} if $\mathrm{Hom}_{A}(T,\tau T)=0$,
where $\tau$ is the Auslander-Reiten translation. Moreover, $T$ is called {\it $\tau$-tilting} if $T$ is $\tau$-rigid
and $|T|=|A|$.
\item We call $T \in \mathrm{mod}A$ {\it $\tau^{-1}$-rigid} if $\mathrm{Hom}_{A}(\tau^{-1}T,T)=0$. Moreover, $T$ is called {\it$\tau^{-1}$-tilting} if $T$ is $\tau^{-1}$-rigid and $|T|=|A|$.
\end{enumerate}
\end{definition}

Clearly, $T$ is $\tau^{-1}$-rigid (resp. $\tau^{-1}$-tilting)
module in $\mathrm{mod}A$ if and only if $\mathbb{D}T$ is $\tau$-rigid (resp.
$\tau$-tilting) module in $\mathrm{mod}A^{op}$.

Recall that $T \in \mathrm{mod}A$ is called {\it faithful} if the
right annihilator of $T$ is zero. Now we can state the following proposition
in \cite{AIR}.

\begin{proposition}\label{2.b}
 \begin{enumerate}[\rm(1)]
\item Any faithful $\tau$-rigid module $T$ in $\mathrm{mod}A$ is a partial tilting $A$-module, that is, $\mathrm{Ext}_{A}^{1}(T,T)=0$ and $\mathrm{pd}_{A}T \leq1$.
\item Any faithful $\tau$-tilting module in $\mathrm{mod}A$ is a classical tilting $A$-module.
\end{enumerate}
\end{proposition}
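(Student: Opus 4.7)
The plan is to prove (1) and then deduce (2) essentially for free. For (1), I would work with a minimal projective presentation $P_1 \xrightarrow{f} P_0 \to T \to 0$ and use the standard reformulation of $\tau$-rigidity: $T$ is $\tau$-rigid if and only if the map $\Hom_A(f,T)\colon \Hom_A(P_0,T) \to \Hom_A(P_1,T)$ is surjective, i.e.\ every morphism $P_1 \to T$ factors through $f$. This follows from $\tau T = \mathbb{D}\Tr T$ together with the identification $\Hom_A(T,\mathbb{D}\Tr T)\cong \mathbb{D}\Coker\Hom_A(f,T)$ (obtained via $\Hom_A(P_i,A)\otimes_A T\cong \Hom_A(P_i,T)$), and the same reasoning shows $T^n$ is also $\tau$-rigid for every $n$.

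The vanishing $\Ext^1_A(T,T)=0$ is a standard consequence of $\tau$-rigidity (for instance, via the identification of $\Ext^1_A(T,T)$ with a subquotient of $\Coker\Hom_A(f,T)$). The crux of (1) is the projective-dimension bound. Let $K=\Ker f$; the goal is to show $K=0$. Using faithfulness, fix an embedding $\iota\colon A\hookrightarrow T^n$. Since $P_1$ is projective it is a direct summand of some $A^k$, so there is an injective morphism $\sigma=(\alpha_1,\ldots,\alpha_k)\colon P_1\hookrightarrow A^k$ with components $\alpha_i\colon P_1\to A$. By $\tau$-rigidity of $T^n$, each composition $\iota\alpha_i\colon P_1\to T^n$ factors as $\iota\alpha_i=\beta_i f$ for some $\beta_i\colon P_0\to T^n$. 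Restricting to $K$ gives $\iota\alpha_i|_K=\beta_i(f|_K)=0$, and injectivity of $\iota$ forces $\alpha_i|_K=0$ for all $i$. Hence $\sigma|_K=0$, and injectivity of $\sigma$ gives $K=0$. Therefore $0\to P_1\to P_0\to T\to 0$ is exact and $\pd_A T\leq 1$.

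For (2), suppose $T$ is faithful $\tau$-tilting. By definition $T$ is $\tau$-rigid with $|T|=|A|$, so (1) yields $\pd_A T\leq 1$ and $\Ext^1_A(T,T)=0$; these are exactly conditions (1) and (2) of Definition~\ref{2.a} with $n=1$. The sentence immediately after Definition~\ref{2.a} in the excerpt records that, with $n=1$, condition~(3) of that definition is equivalent to $|T|=|A|$, which is our hypothesis. Hence $T$ is a classical tilting module.

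The principal obstacle is the $\pd_A T\leq 1$ step in (1): it requires merging the ``external'' information from faithfulness (an embedding of $A$ into a power of $T$) with the ``internal'' factorization property of $\tau$-rigidity (concerning maps out of $P_1$ into powers of $T$). The key trick is to lift the projectivity of $P_1$ into an embedding $P_1\hookrightarrow A^k$; this is what lets the two hypotheses interact precisely on $K=\Ker f$. Without faithfulness, $\tau$-rigidity alone cannot force $\pd_A T\leq 1$, so both hypotheses are genuinely necessary in this step.
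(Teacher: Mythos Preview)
The paper does not prove this proposition; it is quoted from \cite{AIR} without proof (see the sentence introducing it: ``Now we can state the following proposition in \cite{AIR}''). So there is no in-paper argument to compare against.

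Your reconstruction is correct and follows the same line as the original argument in \cite{AIR}: characterize $\tau$-rigidity as surjectivity of $\Hom_A(f,T)$ for a minimal projective presentation $P_1\xrightarrow{f}P_0\to T\to 0$, then kill $K=\Ker f$ by combining faithfulness ($A\hookrightarrow T^n$) with the factorization property. One small wording issue: to factor $\iota\alpha_i\colon P_1\to T^n$ through $f$ you need surjectivity of $\Hom_A(f,T^n)$, which follows from $\Hom_A(T,\tau(T^n))\cong\Hom_A(T,\tau T)^n=0$ (or, equivalently, from applying $\Hom_A(f,T)$ componentwise). Saying ``by $\tau$-rigidity of $T^n$'' is literally the statement $\Hom_A(T^n,\tau T^n)=0$, whose factorization criterion would involve a presentation of $T^n$ rather than of $T$. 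This is purely a phrasing matter; the mathematics is sound, and part~(2) follows exactly as you say.
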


Now we can state the properties of $\tau$-rigid cotilting modules,
which is essential in the proof of the main result.

\begin{proposition}\label{2.1}
 \begin{enumerate}[\rm(1)]
\item If a cotilting (resp. tilting) module $T$ in $\mathrm{mod}A$ is $\tau$-rigid, then $T$ is a classical tilting $A$-module.
\item If a tilting (resp. cotilting) module $T$ in $\mathrm{mod}A$ is $\tau^{-1}$-rigid, then $T$ is a classical cotilting $A$-module.
\end{enumerate}

\end{proposition}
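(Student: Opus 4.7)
The plan is to reduce all four implications to Proposition~\ref{2.b}(2), which upgrades any faithful $\tau$-tilting module to a classical tilting module. Two auxiliary facts are needed in each case: (a) every tilting or cotilting module $T$ in $\mod A$ is faithful, and (b) $|T|=|A|$ holds for any such $T$ (a standard consequence of the classical tilting/Miyashita theory).

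For (1), first consider the tilting case. The coresolution $0\to A\to T_{0}\to \cdots\to T_{n}\to 0$ from Definition~\ref{2.a}(3) embeds $A$ into $T_{0}\in\add T$, so $T$ is faithful; together with $|T|=|A|$, the $\tau$-rigidity hypothesis promotes $T$ to a faithful $\tau$-tilting module, and Proposition~\ref{2.b}(2) immediately concludes. In the cotilting case, the dual resolution $0\to T_{n}\to \cdots\to T_{0}\to \mathbb{D}A\to 0$ shows that $\mathbb{D}A$ is generated by $T$. Since $\mathbb{D}A$ is a faithful right $A$-module, any $a\in A$ annihilating $T$ also annihilates $\mathbb{D}A$, forcing $a=0$; hence $T$ is faithful, and the same application of Proposition~\ref{2.b}(2), together with $|T|=|A|$, finishes the argument.

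For (2), the strategy is to dualise. If $T\in\mod A$ is tilting (resp. cotilting) and $\tau^{-1}$-rigid, then $\mathbb{D}T\in \mod A^{\op}$ is cotilting (resp. tilting) and, by the remark immediately following the definition of $\tau^{-1}$-rigidity, it is $\tau$-rigid over $A^{\op}$. Applying part (1) to $A^{\op}$ makes $\mathbb{D}T$ a classical tilting $A^{\op}$-module; applying $\mathbb{D}$ once more shows that $T$ itself is a classical cotilting $A$-module.

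The main point I would check carefully is the faithfulness of cotilting modules (and the equality $|T|=|A|$ in the non-classical case), since everything else is then a transparent application of Proposition~\ref{2.b} combined with the standard duality $\mathbb{D}$.
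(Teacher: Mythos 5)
Your proof is correct and takes essentially the same route as the paper: establish that $T$ is faithful, note $|T|=|A|$, and invoke Proposition~\ref{2.b} to upgrade the faithful $\tau$-rigid module to a classical tilting module, with part~(2) following by duality. The only cosmetic difference is that the paper cites \cite[Chapter VI, Lemma 2.2]{AsSS} for faithfulness whereas you unwind that lemma directly from the (co)resolutions in Definition~\ref{2.a}.
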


\begin{proof}
We only prove (1), since the proof of (2) is similar. Because $T$ is
cotilting, $T$ is faithful by \cite[Chapter VI, Lemma
2.2]{AsSS}. By Proposition \ref{2.b}, any faithful
$\tau$-rigid module in $\mathrm{mod}A$ is a partial tilting $A$-module. Note that
$|T|=|A|$, we are done.
\end{proof}

For any $X\in \mathrm{mod} A$, denote by $\mathrm{Fac}X =\{M| X^{n}
\twoheadrightarrow M\ for\ some\ integer\ n\}$. The
following proposition \cite[Proposition 5.8]{AS} are useful.

\begin{proposition}For $X$ and $Y$ in $\mathrm{mod} A$, we have the following
 \begin{enumerate}[\rm(1)]
\item  $\mathrm{Hom}_{A}(X,\tau Y)=0$ if and only if
$\mathrm{Ext}_{A}^{1}(Y,\mathrm{Fac}X)=0$.
\item $X$ is $\tau$-rigid if and only if $\mathrm{Ext}_{A}^{1}(X,\mathrm{Fac}X)=0$.
\end{enumerate}
\end{proposition}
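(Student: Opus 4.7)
The plan is to treat (2) as the special case $Y=X$ of (1), which is immediate since $X$ is trivially a quotient of itself and so lies in $\mathrm{Fac}\, X$. The real task is therefore to prove (1). The key tool I will use is the Auslander--Reiten formula
\[
\overline{\mathrm{Hom}}_A(M, \tau Y) \;\cong\; \mathbb{D}\,\mathrm{Ext}_A^1(Y, M),
\]
where $\overline{\mathrm{Hom}}$ denotes morphisms modulo those factoring through an injective module.

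For the forward direction of (1), I assume $\mathrm{Hom}_A(X, \tau Y)=0$ and fix $M \in \mathrm{Fac}\, X$ together with a surjection $X^n \twoheadrightarrow M$. Applying the left-exact functor $\mathrm{Hom}_A(-, \tau Y)$ yields an embedding $\mathrm{Hom}_A(M, \tau Y) \hookrightarrow \mathrm{Hom}_A(X, \tau Y)^n = 0$, so $\mathrm{Hom}_A(M, \tau Y)=0$, a fortiori $\overline{\mathrm{Hom}}_A(M, \tau Y)=0$, and the AR formula delivers $\mathrm{Ext}_A^1(Y, M)=0$.

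For the converse, I first reduce to the case where $Y$ has no projective direct summand: such a summand $P$ satisfies $\tau P=0$ and $\mathrm{Ext}_A^1(P,-)=0$, so removing it changes neither $\mathrm{Hom}_A(X,\tau Y)$ nor $\mathrm{Ext}_A^1(Y,-)$. This ensures that $\tau Y$ itself has no injective direct summand. Now assume $\mathrm{Ext}_A^1(Y, \mathrm{Fac}\, X)=0$ and take any $f\colon X \to \tau Y$; its image $N := \mathrm{Im}(f)$ is a quotient of $X$, hence lies in $\mathrm{Fac}\, X$, so $\mathrm{Ext}_A^1(Y, N)=0$. The AR formula then forces the inclusion $\iota\colon N \hookrightarrow \tau Y$ to factor as $N \xrightarrow{\alpha} I \xrightarrow{\beta} \tau Y$ with $I$ injective. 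Since $\iota$ is monic, so is $\alpha$, and injectivity of $I$ allows me to extend $\alpha$ along the essential embedding $N \hookrightarrow E(N)$ into the injective envelope to obtain $\gamma\colon E(N) \to I$. The composite $\varphi := \beta\gamma\colon E(N) \to \tau Y$ restricts to $\iota$ on $N$; essentiality of $N \subseteq E(N)$ together with injectivity of $\iota$ then forces $\ker \varphi \cap N = 0$ and hence $\ker \varphi = 0$. Thus $E(N)$ embeds as an injective submodule, and therefore as a direct summand, of $\tau Y$. Since $\tau Y$ has no injective summand, $E(N)=0$, whence $N=0$ and $f=0$, proving $\mathrm{Hom}_A(X, \tau Y)=0$.

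The main obstacle is this last step: the AR formula only detects Hom modulo maps factoring through injectives, so deducing the honest vanishing of $\mathrm{Hom}_A(X, \tau Y)$ requires the injective-envelope trick together with the structural fact that $\tau Y$ carries no injective summand once projective summands of $Y$ are removed. Everything else is formal left-exactness of Hom applied to surjections $X^n \twoheadrightarrow M$.
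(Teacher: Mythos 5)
Your proof is correct. Note that the paper does not actually prove this statement; it is quoted as \cite[Proposition 5.8]{AS} (Auslander--Smal{\o}) without an argument, so there is nothing in the paper to compare against, and your write-up fills that gap.

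Your argument is the standard one: the forward implication is pure left-exactness of $\mathrm{Hom}_A(-,\tau Y)$ applied to a surjection $X^n\twoheadrightarrow M$, and the converse rests on the Auslander--Reiten formula $\overline{\mathrm{Hom}}_A(M,\tau Y)\cong \mathbb{D}\mathrm{Ext}_A^1(Y,M)$. The genuinely delicate point, which you identify and handle correctly, is that the AR formula only gives vanishing of $\mathrm{Hom}$ modulo injectives, so one must promote $\overline{\mathrm{Hom}}_A(N,\tau Y)=0$ to $\mathrm{Hom}_A(N,\tau Y)=0$ for $N=\mathrm{Im}(f)$. Your reduction to the case that $Y$ has no projective summands (so that $\tau Y$ has no injective summands, since $\tau$ carries indecomposable non-projectives to indecomposable non-injectives) and the subsequent injective-envelope argument (extend the factorization through $I$ to a monomorphism $E(N)\hookrightarrow\tau Y$, which then splits off, forcing $E(N)=0$) is exactly the right way to close this. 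One small remark: the opening observation that $X\in\mathrm{Fac}\,X$ is not actually needed to deduce (2) from (1) --- setting $Y=X$ in the equivalence of (1) gives (2) verbatim --- but it does no harm.
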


For an algebra $A$, denote by $\mathcal{P} (A)$ the subcategory of finitely generated projective right $A$-modules
 and denote by $\mathcal{I}(A)$ the subcategory of finitely generated injective right $A$-modules.
 Now we recall the following result due to Happel and Unger \cite[Lemma
1.3]{HU}. We provide a new proof for this result.

\begin{theorem}\label{2.2} For an algebra $A$, the following are equivalent.
 \begin{enumerate}[\rm(1)]
 \item $A$ is Iwanaga-Gorenstein.
 \item Every cotilting module in $\mathrm{mod} A$ is tilting.
 \item Every tilting module in $\mathrm{mod} A$ is cotilting.
\item There exists a tilting-cotilting module in $\mathrm{mod} A$.

\end{enumerate}
\end{theorem}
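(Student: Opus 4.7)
The plan is to establish $(1) \Leftrightarrow (2)$ and $(1) \Leftrightarrow (4)$; then $(1) \Leftrightarrow (3)$ drops out by duality, since condition $(1)$ is self-dual while $\mathbb{D}$ exchanges tilting $A$-modules with cotilting $A^{\mathrm{op}}$-modules, so $(3)$ for $A$ is the same as $(2)$ for $A^{\mathrm{op}}$. The main technical input I intend to use is the classical Iwanaga-Gorenstein characterization: if $\mathrm{id}_A A = n < \infty = \mathrm{id}_{A^{\mathrm{op}}} A$, then for every $M \in \mathrm{mod}\, A$ one has $\mathrm{pd}_A M < \infty$ if and only if $\mathrm{id}_A M < \infty$, with both bounded by $n$.

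For $(1) \Rightarrow (4)$, I would take $T = A$: it is trivially a tilting module, and it is cotilting because $\mathrm{id}_A A < \infty$ by hypothesis, while the other side of the I-G property, $\mathrm{id}_{A^{\mathrm{op}}} A < \infty$, is equivalent to $\mathrm{pd}_A \mathbb{D}A < \infty$ and so supplies the required $\mathrm{add}\, A$-coresolution of $\mathbb{D}A$. For $(4) \Rightarrow (1)$: given a tilting-cotilting $T$, the tilting coresolution $0 \to A \to T^0 \to \cdots \to T^m \to 0$ with each $T^i \in \mathrm{add}\, T$ of finite injective dimension forces $\mathrm{id}_A A < \infty$ by a dimension chase along the sequence, and the cotilting coresolution of $\mathbb{D}A$ dually gives $\mathrm{pd}_A \mathbb{D}A < \infty$, that is, $\mathrm{id}_{A^{\mathrm{op}}} A < \infty$.

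For $(1) \Rightarrow (2)$, let $T$ be cotilting, so $\mathrm{id}_A T < \infty$ and $\Ext^i_A(T,T) = 0$ for all $i \geq 1$; the I-G property immediately upgrades this to $\mathrm{pd}_A T < \infty$, yielding the first two axioms of Definition \ref{2.a}. To produce an $\mathrm{add}\, T$-coresolution of $A$, I start from a finite injective coresolution $0 \to A \to I^0 \to \cdots \to I^n \to 0$ of $A$; each $I^j$ lies in $\mathrm{add}\, \mathbb{D}A$ and hence inherits a finite $\mathrm{add}\, T$-coresolution from the one the cotilting axiom provides for $\mathbb{D}A$. Splicing these via a horseshoe-type construction then assembles the desired $\mathrm{add}\, T$-coresolution of $A$. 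For $(2) \Rightarrow (1)$, apply $(2)$ to the always-cotilting module $T = \mathbb{D}A$: then $\mathbb{D}A$ is tilting, so $\mathrm{pd}_A \mathbb{D}A < \infty$ (equivalently $\mathrm{id}_{A^{\mathrm{op}}} A < \infty$), and the tilting coresolution $0 \to A \to I^0 \to \cdots \to I^m \to 0$ consists of modules in $\mathrm{add}\, \mathbb{D}A \subseteq \mathcal{I}(A)$, whence $\mathrm{id}_A A < \infty$.

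The main obstacle will be the horseshoe step inside $(1) \Rightarrow (2)$: one must check that iterated splicing produces a genuine finite $\mathrm{add}\, T$-coresolution of $A$, which relies on $\Ext^i_A(T,T) = 0$ to keep the construction well-defined at each stage and on the finiteness of $\mathrm{id}_A A$ for termination.
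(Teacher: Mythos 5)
Your reductions $(1)\Rightarrow(4)$, $(4)\Rightarrow(1)$, and $(2)\Rightarrow(1)$ are sound, and your observation that $(3)$ for $A$ is literally $(2)$ for $A^{\mathrm{op}}$ under $\mathbb{D}$ is a clean way to dispatch $(1)\Leftrightarrow(3)$ (the paper just says ``similarly''). You also correctly note, where the paper is a bit terse, that the Gorenstein property is what converts $\mathrm{id}_A T<\infty$ into the needed $\mathrm{pd}_A T<\infty$.

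The genuine gap is the step you yourself flag inside $(1)\Rightarrow(2)$: the ``horseshoe splice'' does not produce the $\mathrm{add}\,T$-coresolution of $A$. The cotilting axiom yields $\mathrm{add}\,T$-\emph{resolutions} of the injectives $I^j$ (sequences of the shape $0\to T_{m}\to\cdots\to T_{0}\to I^j\to 0$, with $I^j$ on the right), whereas you need $A$ realized as a \emph{submodule} of an $\mathrm{add}\,T$-module, and then a terminating coresolution $0\to A\to T^0\to T^1\to\cdots\to T^k\to 0$. Having $A\hookrightarrow I^0$ and $T_0\twoheadrightarrow I^0$ gives no map $A\to T_0$, let alone an injective one with cokernel again of the right kind; the horseshoe lemma combines two (co)resolutions of the outer terms of a short exact sequence into one for the middle term, and that is not the configuration here. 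Even if one could manufacture some complex, exactness would not come for free. What the paper actually invokes is the Auslander--Buchweitz/Auslander--Reiten theorem for cotilting modules: $^{\perp}T$ coincides with $\mathcal{X}_T$, the class of modules admitting an exact $\mathrm{add}\,T$-coresolution with all cosyzygies in $^{\perp}T$. Since $A\in\mathcal{P}(A)\subseteq{^{\perp}T}$, one obtains such a (possibly infinite) coresolution of $A$, and then the Gorenstein bound $\mathrm{id}_A\mathcal{P}(A)\le r$ is used exactly once, to show $\mathrm{Ext}^1_A(X_r,X_{r-1})\cong\cdots\cong\mathrm{Ext}^{r+1}_A(X_r,A)=0$ and hence that the coresolution splits after $r$ steps. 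That theorem about $\mathcal{X}_T={^{\perp}T}$ is the missing ingredient in your argument; without it, the ``one step at a time'' existence of an injective left $\mathrm{add}\,T$-approximation of $A$ with cokernel in $^{\perp}T$ is not established.
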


\begin{proof}

$(1)\Rightarrow (2)$  Assume that $A$ is Iwanaga-Gorenstein and $T$
is a cotilting module in $\mathrm{mod}A$. Since $T$ is
selforthogonal and $\mathrm{id}_{A} T$ is finite, we only need to show
that every module in $\mathcal{P} (A)$ has a finite exact
coresolution in $\mathrm{add} T$.

   Denote by
    $^{\bot} T =\{M\in \mathrm{mod} A \mid \mathrm{Ext}_{A}^{i} (M,T)=0\ for\ i\geq 1\}$ and $$ X_{T}=\{X\mid 0\rightarrow X \rightarrow T_{0} \stackrel{f_0}{\rightarrow} T_{1} \rightarrow \cdots \rightarrow T_{n} \stackrel{f_n}{\rightarrow} T_{n+1} \rightarrow \cdots, T_{i} \in \mathrm{add} T, \mathrm{Im} f_{n} \in ^{\bot} T, n\geq 0\}.$$ Since $T$ is a cotilting $A$-module, we have that $\mathcal{P} (A)$ is contained in $\mathcal{X}_{T} ={^{\bot}T}$.

    Then there exists an exact sequence
    $$0\rightarrow P \rightarrow T_{0} \stackrel{f_0}{\rightarrow} T_{1} \rightarrow \cdots \rightarrow T_{n} \stackrel{f_n}{\rightarrow} T_{n+1} \rightarrow \cdots$$
    for all $A$-module $P\in\mathcal{P} (A)$, where $T_{i}\in\mathrm{add} T$ and $X_{i}=\mathrm{Im} f_{i}$ is in $\mathcal{X} _{T}$ for all $i\geq 0$. Let $\mathrm{id}_{A} \mathcal{P} (A) \leq r$. Then
    $\mathrm{Ext}_{A}^{1} (X_{r},X_{r-1}) =\mathrm{Ext}_{A}^{2} (X_{r},X_{r-2})=\cdots =\mathrm{Ext}_{A}^{r+1} (X_{r},P) =0,$
    hence the exact sequence $0\rightarrow X_{r-1} \rightarrow T_{r} \rightarrow X_{r} \rightarrow 0$
     splits, such that $X_{r-1}\in\mathrm{add} T$. This implies that $T$ is a tilting $A$-module.

$(2)\Rightarrow (1)$  Assume that a module $T$ in $\mathrm{mod}A$ is a
cotilting-tilting module. By the definitions of cotilting and
tilting modules every module in $\mathcal{I} (A)$ has a finite exact
resolution in $\mathrm{add} T$ and every module in $\mathcal{P} (A)$
has a finite exact coresolution in $\mathrm{add} T$. Since
$\mathrm{id}_{A} T$ and $\mathrm{pd}_{A} T$ both are finite, it
follows immediately that both $\mathrm{pd}_{A}  \mathcal{I} (A)$ and
$\mathrm{id}_{A} \mathcal{P} (A)$ are finite. Therefore $A$ is
Gorenstein.

Similarly, one can prove the equivalence of (1) and (3).

In the following we show the equivalence of (1) and (4).

 $(1)\Rightarrow (4)$  Assume that $A$ is
Gorenstein. Then $A$ is a tilting-cotilting
module.

 $(4)\Rightarrow (1)$ is similar to
$(2)\Rightarrow (1)$.
\end{proof}

Now we are in a position to show the main result in this section.
\begin{theorem}\label{2.4} For an algebra $A$, the following are equivalent.
 \begin{enumerate}[\rm(1)]
\item $A$ is Iwanaga-Gorenstein with $\mathrm{id}_{A}A\leq 1$.
\item $\mathbb{D}A$ is a $\tau$-rigid module in $\mathrm{mod}A$.
\item $A$ is a $\tau^{-1}$-rigid module in $\mathrm{mod}A$.
\end{enumerate}
\end{theorem}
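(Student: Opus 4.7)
The plan is to prove the cycle $(1)\Rightarrow(2)\Rightarrow(1)$ directly, and then derive $(1)\Leftrightarrow(3)$ from $(1)\Leftrightarrow(2)$ by passing to $A^{\op}$.

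For $(1)\Rightarrow(2)$, I will use that the Iwanaga-Gorenstein hypothesis forces $\id_{A^{\op}}A=\id_{A}A\leq 1$, which under the standard $K$-duality (swapping projectives and injectives) translates to $\pd_{A}\mathbb{D}A\leq 1$. Then I will apply the proposition immediately before Theorem~\ref{2.2}, which says $\mathbb{D}A$ is $\tau$-rigid iff $\Ext_{A}^{1}(\mathbb{D}A,\Fac\mathbb{D}A)=0$. For any $M\in\Fac\mathbb{D}A$, I choose a surjection $(\mathbb{D}A)^{n}\twoheadrightarrow M$ with kernel $K$; the resulting long exact $\Ext$-sequence pins $\Ext_{A}^{1}(\mathbb{D}A,M)$ between $\Ext_{A}^{1}(\mathbb{D}A,(\mathbb{D}A)^{n})=0$ (injectivity of $\mathbb{D}A$) and $\Ext_{A}^{2}(\mathbb{D}A,K)=0$ (from $\pd_{A}\mathbb{D}A\leq 1$), forcing the middle term to vanish.

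For $(2)\Rightarrow(1)$, the key observation is that $\mathbb{D}A$ is already a classical cotilting $A$-module: injectivity gives $\id_{A}\mathbb{D}A=0\leq 1$ and $\Ext_{A}^{\geq 1}(\mathbb{D}A,\mathbb{D}A)=0$, and $|\mathbb{D}A|=|A|$ supplies the coresolution condition. Assuming $(2)$, Proposition~\ref{2.1}(1) then upgrades $\mathbb{D}A$ to a classical tilting $A$-module. From this I read off $\pd_{A}\mathbb{D}A\leq 1$, hence $\id_{A^{\op}}A\leq 1$; and the defining short exact sequence $0\to A\to T_{0}\to T_{1}\to 0$ with $T_{0},T_{1}\in\add\mathbb{D}A$ (hence injective) gives $\id_{A}A\leq 1$. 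Both injective dimensions being finite yields $A$ Iwanaga-Gorenstein with $\id_{A}A\leq 1$.

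For $(1)\Leftrightarrow(3)$, I will apply the already-established $(1)\Leftrightarrow(2)$ to $A^{\op}$. Iwanaga-Gorensteinness is self-dual, and in this case $\id_{A}A=\id_{A^{\op}}A^{\op}$, so condition $(1)$ for $A$ coincides with condition $(1)$ for $A^{\op}$. The corresponding condition $(2)$ for $A^{\op}$ says $\mathbb{D}(A^{\op})$ is $\tau$-rigid in $\mod A^{\op}$; since $\mathbb{D}(A^{\op})\cong\mathbb{D}A$ as right $A^{\op}$-modules, the paper's remark after the definition of $\tau^{-1}$-rigidity translates this into exactly $(3)$. The main obstacle is recognizing in $(2)\Rightarrow(1)$ that $\mathbb{D}A$ is itself a classical cotilting module so that Proposition~\ref{2.1}(1) becomes applicable; once that is in hand, the remainder is a single long-exact-sequence computation and formal duality.
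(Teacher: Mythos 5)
Your proposal is correct and follows essentially the same route as the paper: for $(1)\Rightarrow(2)$, the same long exact sequence argument via Proposition 2.5 and $\pd_A\mathbb{D}A\leq 1$; for $(2)\Rightarrow(1)$, the same conclusion that $\mathbb{D}A$ is a classical tilting module (you route through Proposition~\ref{2.1}(1) by first observing $\mathbb{D}A$ is a classical cotilting module, whereas the paper cites faithfulness and Proposition~\ref{2.b}(2) directly, but Proposition~\ref{2.1}(1) is itself proved by exactly that combination); and $(1)\Leftrightarrow(3)$ by passing to $A^{\op}$. One small improvement worth noting: you extract $\id_A A\leq 1$ directly from the coresolution $0\to A\to T_0\to T_1\to 0$ with $T_0,T_1\in\add\mathbb{D}A$ injective, whereas the paper obtains only $\id_{A^{\op}}A\leq 1$ from $\pd_A\mathbb{D}A\leq 1$ and then invokes the nontrivial symmetry result that $\id_A A\leq 1$ if and only if $\id_{A^{\op}}A\leq 1$; your variant is slightly more self-contained.
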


\begin{proof}
We show the equivalence of (1) and (2). Similarly, one can show the
equivalence of (1) and (3).

$(1)\Rightarrow(2)$ For any $M\in \mathrm{Fac}\mathbb{D}A$, there
exists a short exact sequence
$$0\rightarrow N \rightarrow \mathbb{D}A^{n} \rightarrow M \rightarrow 0  \qquad (2.1) $$
Applying the functor $\mathrm{Hom}_{A}(\mathbb{D}A,-)$ to the short
exact sequence $(2.1)$ yields the following long exact sequence
$$0 \rightarrow \mathrm{Hom}_{A}(\mathbb{D}A,N) \rightarrow \mathrm{Hom}_{A}(\mathbb{D}A,\mathbb{D}A^{n})
 \rightarrow \mathrm{Hom}_{A}(\mathbb{D}A,M) \rightarrow \mathrm{Ext}_{A}^{1}(\mathbb{D}A,N)$$
 $$\rightarrow \mathrm{Ext}_{A}^{1}(\mathbb{D}A,\mathbb{D}A^{n}) \rightarrow \mathrm{Ext}_{A}^{1}(\mathbb{D}A,M)
 \rightarrow \mathrm{Ext}_{A}^{2}(\mathbb{D}A,N) \rightarrow \mathrm{Ext}_{A}^{2}(\mathbb{D}A,\mathbb{D}A^{n}) \rightarrow
  \cdots$$
Then $\mathrm{Ext}_{A}^{1}(\mathbb{D}A,M) \simeq
\mathrm{Ext}_{A}^{2}(\mathbb{D}A,N)$ since $\mathbb{D}A$ is an
injective $A$-module, and $\mathrm{pd}_{A}\mathbb{D}A \leq 1$ since
$\mathrm{id}_{A}A \leq 1$. Thus $\mathrm{Ext}_{A}^{1}(\mathbb{D}A,M)
\simeq \mathrm{Ext}_{A}^{2}(\mathbb{D}A,N) =0$. We have
$\mathrm{Ext}_{A}^{1}(\mathbb{D}A,\mathrm{Fac}\mathbb{D}A) =0$,
therefore $\mathbb{D}A$ is a $\tau$-rigid $A$-module by Proposition
2.5. Since $|\mathbb{D}A|=|A|$, one gets $\mathbb{D}A$ is a
$\tau$-tilting $A$-module.

$(2)\Rightarrow(1)$ Since $\mathbb{D}A$ is $\tau$-rigid and
$|\mathbb{D}A|=|A|$, $\mathbb{D}A$ is a $\tau$-tilting
$A$-module. By \cite[ChapterVI,
Lemma 2.2]{AsSS}, $\mathbb{D}A$ is faithful. Then $\mathbb{D}A$ is a classical tilting
$A$-module by Proposition \ref{2.b}(2), and hence
$\mathrm{pd}_{A}\mathbb{D}A \leq 1$. Thus $\mathrm{id}_{A^{\rm
op}}A \leq 1$. Note that $\mathrm{id}_{A}A \leq 1$ if and only if
$\mathrm{id}_{A^{\rm op}}A\leq 1$, then $\mathrm{id}_{A}A \leq 1$.
\end{proof}

The following corollary is immediate.

\begin{corollary}\label{2.5}For an algebra $A$, if one of the following conditions is satisfied,
 \begin{enumerate}[\rm(1)]
\item Every $\tau$-tilting $A$-module is a $\tau^{-1}$-tilting $A$-module;
\item Every $\tau^{-1}$-tilting $A$-module is a $\tau$-tilting $A$-module,
\end{enumerate}
then $A$ is Iwanaga-Gorenstein with $\mathrm{id}_{A}A\leq1$.
\end{corollary}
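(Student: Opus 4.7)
The plan is to reduce both statements to Theorem \ref{2.4} by exhibiting a single canonical $\tau$-tilting (resp.\ $\tau^{-1}$-tilting) module whose extra rigidity property forces the Gorenstein condition. The observation I would use is that $A$ itself is always a $\tau$-tilting module: since $A$ is projective one has $\tau A=0$, so $\mathrm{Hom}_A(A,\tau A)=0$, and $|A|=|A|$ trivially. Dually, $\mathbb{D}A$ is always a $\tau^{-1}$-tilting module because $\tau^{-1}(\mathbb{D}A)=0$ (as $\mathbb{D}A$ is injective) and $|\mathbb{D}A|=|A|$.

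With these two remarks in hand, the proof is essentially a one-line deduction in each case. Assume (1), that every $\tau$-tilting $A$-module is also $\tau^{-1}$-tilting. Applying this to the $\tau$-tilting module $A$ itself, I conclude that $A$ is $\tau^{-1}$-tilting and in particular $\tau^{-1}$-rigid, so Theorem \ref{2.4}(3)$\Rightarrow$(1) yields that $A$ is Iwanaga-Gorenstein with $\mathrm{id}_A A\leq 1$. For (2), I apply the hypothesis to the $\tau^{-1}$-tilting module $\mathbb{D}A$ to see that $\mathbb{D}A$ is $\tau$-rigid, and then Theorem \ref{2.4}(2)$\Rightarrow$(1) finishes the argument.

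There is no real obstacle here; the only point worth checking carefully is the preliminary claim that $A$ and $\mathbb{D}A$ are always $\tau$-tilting and $\tau^{-1}$-tilting respectively, which follows from the vanishing of $\tau$ on projectives and of $\tau^{-1}$ on injectives. Once that is recorded, the corollary is an immediate consequence of Theorem \ref{2.4}, so in the write-up I would keep the proof to a few lines and simply invoke the theorem twice.
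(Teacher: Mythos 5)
Your proposal is correct and takes essentially the same approach as the paper: the paper also applies the hypothesis to the canonical $\tau$-tilting module $A$ and invokes Theorem \ref{2.4}(3), noting that case (2) is dual (via $\mathbb{D}A$, exactly as you spell out).
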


\begin{proof}
We only prove (1) since the proof of (2) is similar. By assumption
we have that the $\tau$-tilting module $A$ is a $\tau^{-1}$-tilting
module. Then $A$ is a Gorenstein algebra with $\mathrm{id}_{A}A\leq
1$ by Theorem \ref{2.4}(3).
\end{proof}

We should remark that the converse of Corollary 2.8 is not true in
general.

\begin{example}

Let $A$ be the algebra given by the quiver $\xymatrix{
1\ar@<2pt>[r]^{\alpha}&2\ar@<2pt>[l]^{\beta}
}$ with relations $\alpha \beta =\beta \alpha =0$. Then the support $\tau$-tilting quiver of $A$ is the following:\\
$$\begin{xy}
 (0,0)*+{\begin{smallmatrix}1\\2\end{smallmatrix} \oplus \begin{smallmatrix}2\\1\end{smallmatrix}}="1",
(-15,-12)*+{\begin{smallmatrix}2\end{smallmatrix} \oplus \begin{smallmatrix}2\\1\end{smallmatrix}}="2",
(15,-12)*+{\begin{smallmatrix}1\\2\end{smallmatrix} \oplus \begin{smallmatrix}1\end{smallmatrix}}="3",
(-15,-24)*+{\begin{smallmatrix} 2\\
\end{smallmatrix}}="4",
(15,-24)*+{\begin{smallmatrix} 1\\
\end{smallmatrix}}="5",
(0,-36)*+{\begin{smallmatrix} 0\\
\end{smallmatrix}}="6",
\ar"1";"2", \ar"1";"3", \ar"2";"4", \ar"3";"5", \ar"4";"6",
\ar"5";"6",
\end{xy}$$

One can show that ${\begin{smallmatrix}2\end{smallmatrix} \oplus \begin{smallmatrix}2\\1\end{smallmatrix}}$
 is $\tau$-tilting but not $\tau^{-1}$-tilting.

\end{example}

At the end of this section, we give an example to show that the
existence of $\tau$-tilting-$\tau^{-1}$-tilting modules (even
$\tau$-rigid classical cotilting modules) is not equivalent to
$1$-Gorensteiness in general.
\begin{example}
Let $A$ be the algebra given by the quiver $1\xrightarrow {\alpha} 2
\xrightarrow {\beta} 3 $ with the relation $\alpha\beta=0$. Then
$T={\begin{smallmatrix}1\\2\end{smallmatrix} \oplus \begin{smallmatrix}2\\3\end{smallmatrix} \oplus \begin{smallmatrix}2\end{smallmatrix}}$ is a
$\tau$-tilting-$\tau^{-1}$-tilting module in $\mathrm{mod} A$ (actually a classical
tilting-cotilting module) but $\mathrm{gl.dim}A=2$.
\end{example}

\section{iterated tilted algebras and $\tau$-rigid modules}

In this section, we focus on the $\tau$-rigid modules over iterated tilted
algebras and show every indecomposable module over an iterated
tilted algebra of Dynkin type is $\tau$-rigid. Throughout this section, all tilting modules are classical tilting modules.

Firstly, we need the notion of torsion pairs.

\begin{definition}\label {4.1} Let $A$ be an algebra. A pair $(\mathcal{T}, \mathcal{F})$ of full subcategories of $\mathrm{mod} A$ is called a {\it torsion pair} if the following conditions are satisfied:
 \begin{enumerate}[\rm(1)]
\item $\mathrm{Hom}_{A}(M,N)=0$ for all $M\in \mathcal{T},N\in \mathcal{F}$.
\item $\mathrm{Hom}_{A}(M,-)|_{\mathcal{F}}=0$ implies $M\in \mathcal{T}$.
\item $\mathrm{Hom}_{A}(-,N)|_{\mathcal{T}}=0$ implies $N\in \mathcal{F}$.
\end{enumerate}
\end{definition}

To introduce the tilting theorem due to Brenner and Bulter, we also
need the following:

\begin{definition}\label{4.2}
Let $A$ be an algebra. Any tilting module $T$ in $\mathrm{mod} A$ induces {\it torsion pairs} $(\mathcal{T}(T),\mathcal{F}(T))$ in $\mathrm{mod}A$ and $(\mathcal{X}(T),\mathcal{Y}(T))$
 in $\mathrm{mod}B$ with $B=\mathrm{End_{A}}T$, where $$\mathcal{T}(T)=\{M_{A}|\mathrm{Ext}_{A}^{1}(T,M)=0\},$$ $$\mathcal{F}(T)=\{M_{A}|\mathrm{Hom}_{A}(T,M)=0\},$$
$$\mathcal{X}(T)=\{X_{B}|\mathrm{Hom}_{B}(X,\mathbb{D}T)=0\}=\{X_{B}|X\otimes _{B}T=0\},$$
$$\mathcal{Y}(T)=\{Y_{B}|\mathrm{Ext}_{B}^{1}(Y,\mathbb{D}T)=0\}=\{Y_{B}|\mathrm{Tor}_{1}^{B}(Y,T)=0\}.$$
\end{definition}

Now we can state the tilting theorem of Brenner and Bulter \cite{BB} as follows:

\begin{theorem}\label{4.3}
Let $A$ be an algebra, $T$ be a tilting module in $\mathrm{mod} A$ and
$B=\mathrm{End}_{A}T$. Let $(\mathcal{T}(T),
\mathcal{F}(T))$ and $(\mathcal{X}(T), \mathcal{Y}(T))$ be
the induced torsion pairs in $\mathrm{mod}A$ and $\mathrm{mod}B$,
respectively. Then $T$ has the following properties:
\begin{enumerate}[\rm(1)]
\item $_{B}T$ is a tilting $B$-module, and the canonical $K$-algebra homomorphism $A\rightarrow\mathrm{End}_{B}T^{\mathrm{op}}$ defined by $a\mapsto(t\mapsto ta)$ is an isomorphism.
\item The functors $\mathrm{Hom}_{A}(T,-)$ and $-\otimes_{B}T$ induce quasi-inverse equivalences between $\mathcal{T}(T)$ and $\mathcal{Y}(T)$.
\item The functors $\mathrm{Ext}_{A}^{1}(T,-)$ and $\mathrm{Tor}_{1}^{B}(-,T)$ induce quasi-inverse equivalences between $\mathcal{F}(T)$ and $\mathcal{X}(T)$.
\end{enumerate}
\end{theorem}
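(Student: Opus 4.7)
The plan is to prove the three parts in order, using the adjunction between $\Hom_A(T,-)$ and $-\otimes_B T$ as the backbone of the whole argument. For (1), I would first apply $\Hom_A(-,T)$ to the tilting coresolution $0\to A\to T^0\to T^1\to 0$ supplied by Definition 2.1(3). Since $\Ext_A^1(T,T)=0$ and each $T^i\in\add T_A$, this yields an exact sequence $0\to\Hom_A(T^1,T)\to\Hom_A(T^0,T)\to T\to 0$ of left $B$-modules whose first two terms lie in $\add({}_BB)$, so $\pd_B{}_BT\le 1$. Dually, applying $\Hom_A(-,T)$ to a minimal projective presentation $P_1\to P_0\to T\to 0$ of $T_A$ produces the required coresolution of ${}_BB$ by $\add({}_BT)$. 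The selforthogonality $\Ext_B^1({}_BT,{}_BT)=0$ is then obtained by identifying this $\Ext$ with $\Ext_A^1(T_A,T_A)=0$ through the projective resolution of ${}_BT$ just constructed. Finally, applying $\Hom_B(-,{}_BT)$ to that resolution and comparing term-by-term with the coresolution of $A_A$ yields the double centralizer isomorphism $A\cong\End_B(T)^{\op}$.

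For (2), I would check that the counit $\varepsilon_N\colon\Hom_A(T,N)\otimes_B T\to N$ and the unit $\eta_M\colon M\to\Hom_A(T,M\otimes_B T)$ of the adjunction are isomorphisms on $\mathcal{T}(T)$ and $\mathcal{Y}(T)$ respectively. Since $\varepsilon_T$ is the canonical isomorphism $B\otimes_B T\xrightarrow{\sim} T$, additivity handles all of $\add T$. For an arbitrary $N\in\mathcal{T}(T)$, take an $\add T$-approximation $0\to N'\to T^n\to N\to 0$: the condition $\Ext_A^1(T,N)=0$ makes $\Hom_A(T,-)$ exact on this sequence, and the five-lemma combined with induction on the $T$-resolution length (using that $N'$ still lies in $\mathcal{T}(T)$ thanks to $\Ext_A^1(T,T)=0$) gives $\varepsilon_N$ an isomorphism. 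The unit side on $\mathcal{Y}(T)$ is treated dually by starting from a projective $B$-resolution of $Y$ and using the $\Tor$-vanishing characterisation of $\mathcal{Y}(T)$.

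Part (3) follows from the same devissage applied to the derived pair $(\Ext_A^1(T,-),\Tor_1^B(-,T))$. For $M\in\mathcal{F}(T)$ I would choose a short exact sequence $0\to M\to E\to M''\to 0$ with $E\in\mathcal{T}(T)$ (obtainable, for instance, by the canonical torsion filtration of the injective envelope of $M$), and use the long exact sequences of $\Hom_A(T,-)$ and $-\otimes_B T$ to transport the isomorphism already established on $\mathcal{T}(T)$ to $\mathcal{F}(T)$ via the connecting morphisms. The main obstacle throughout is the bookkeeping needed to track the unit and counit through these long exact sequences and to verify naturality at each step; the hypothesis $\pd_A T\le 1$ built into the classical tilting definition is what makes the argument tractable, because it forces all higher $\Ext$ and $\Tor$ to vanish and collapses every relevant long exact sequence into a manageable five-term one.
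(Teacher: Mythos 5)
The paper provides no proof of this theorem: it is the classical Brenner--Butler tilting theorem, stated with a citation to \cite{BB}, and a textbook treatment following essentially your outline appears in \cite[Chapter~VI]{AsSS}. Your sketch therefore cannot be measured against an in-paper argument, but it does follow the standard route and is broadly sound. Two small points are worth tightening. In part (2), the reason $N'$ lies again in $\mathcal{T}(T)$ is that $T^n\to N$ is chosen to be a right $\add T$-approximation, which forces $\Hom_A(T,T^n)\to\Hom_A(T,N)$ to be surjective and hence $\Ext_A^1(T,N')=0$; the condition $\Ext_A^1(T,T)=0$ that you invoke only yields $\Ext_A^1(T,T^n)=0$ and is not by itself sufficient. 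Also, the phrase ``induction on the $T$-resolution length'' should be replaced by the usual two-pass diagram chase: first deduce that the counit $\varepsilon_N$ is surjective from the approximation property of $T^n\to N$, then apply the same to $N'$ and use the snake lemma to obtain injectivity of $\varepsilon_N$. This is necessary because $\add T$-resolutions inside $\mathcal{T}(T)$ need not terminate, so there is no finite length to induct on. Neither issue changes the substance: with $\pd_A T\le 1$ the long exact sequences collapse as you say, and parts (1) and (3) are handled correctly by reflecting along $\Hom_A(-,T)$ and by embedding $\mathcal{F}(T)$ into $\mathcal{T}(T)$ via a short exact sequence, respectively.
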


Recall that a torsion pair $(\mathcal{T}, \mathcal{F})$ in
$\mathrm{mod}A$ is called {\it splitting} if for any indecomposable
$M\in\mathrm{mod}A$ either $M\in\mathcal{T}$ or $M\in\mathcal{F}$
holds. For a tilting module $T_{A}$ with
$B=\mathrm{End}_{A}T$,  $T_{A}$ is said to be {\it splitting} if the
induced torsion pair $(\mathcal{X}(T), \mathcal{Y}(T))$ in
$\mathrm{mod}B$ is splitting. The following propositions in \cite{AsSS} are critical in the
proof of the main result in this section.

\begin{proposition}\label{4.4}\cite[VI, Corollary 5.7]{AsSS}
For an algebra $A$, if $\mathrm{gl.dim}A\leq 1$, then every tilting module in $\mathrm{mod}A$ is splitting.
\end{proposition}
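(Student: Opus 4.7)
The plan is to show that every indecomposable $B$-module (where $B = \mathrm{End}_A T$) lies in either $\mathcal{X}(T)$ or $\mathcal{Y}(T)$. Let $M$ be an arbitrary module in $\mathrm{mod}\,B$. Because $(\mathcal{X}(T),\mathcal{Y}(T))$ is a torsion pair in $\mathrm{mod}\,B$, there is a canonical exact sequence
$$0 \to tM \to M \to M/tM \to 0$$
with $tM \in \mathcal{X}(T)$ and $M/tM \in \mathcal{Y}(T)$. If I can show this sequence splits for every $M$, then indecomposability of $M$ forces either $tM = 0$ (so $M \in \mathcal{Y}(T)$) or $M/tM = 0$ (so $M \in \mathcal{X}(T)$), which is precisely the splitting condition for the torsion pair.

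To show the sequence splits, the key is to translate $\mathrm{Ext}^{1}_{B}(M/tM, tM)$ back to $\mathrm{mod}\,A$ via Brenner--Butler. By Theorem \ref{4.3}, the equivalences $\mathrm{Hom}_A(T,-): \mathcal{T}(T) \to \mathcal{Y}(T)$ and $\mathrm{Ext}^{1}_A(T,-): \mathcal{F}(T) \to \mathcal{X}(T)$ let us write $M/tM \cong \mathrm{Hom}_A(T,N)$ for some $N \in \mathcal{T}(T)$ and $tM \cong \mathrm{Ext}^{1}_A(T,L)$ for some $L \in \mathcal{F}(T)$. A standard consequence of the tilting theorem (obtainable from a projective resolution of $N$ by modules in $\mathrm{add}\,T$ and an application of $\mathrm{Hom}_A(-,L)$) gives a natural isomorphism
$$\mathrm{Ext}^{1}_B\bigl(\mathrm{Hom}_A(T,N),\,\mathrm{Ext}^{1}_A(T,L)\bigr) \cong \mathrm{Ext}^{2}_A(N,L).$$

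With this isomorphism in hand, the hypothesis $\mathrm{gl.dim}\,A \leq 1$ immediately yields $\mathrm{Ext}^{2}_A(N,L) = 0$, so $\mathrm{Ext}^{1}_B(M/tM, tM) = 0$ and the canonical sequence splits, completing the argument. The main technical point — and the step I expect to be the main obstacle if one is writing the proof from scratch rather than citing \cite{AsSS} — is establishing the Ext-shift isomorphism $\mathrm{Ext}^{1}_B \leftrightarrow \mathrm{Ext}^{2}_A$; everything else is formal manipulation of the torsion pair and an appeal to the Brenner--Butler equivalences already recorded in Theorem \ref{4.3}.
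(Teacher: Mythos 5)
The paper itself gives no proof of this statement; it is cited directly from \cite[VI, Corollary 5.7]{AsSS}, so there is no in-paper argument to compare against. Your proposal is correct and follows the same route as that source: reduce the splitting of $(\mathcal{X}(T),\mathcal{Y}(T))$ to the vanishing of $\mathrm{Ext}^1_B(\mathcal{Y}(T),\mathcal{X}(T))$, transport back to $\mathrm{mod}A$ via the Brenner--Butler equivalences through the shift isomorphism $\mathrm{Ext}^1_B(\mathrm{Hom}_A(T,N),\mathrm{Ext}^1_A(T,L))\cong\mathrm{Ext}^2_A(N,L)$ for $N\in\mathcal{T}(T)$, $L\in\mathcal{F}(T)$, and observe that $\mathrm{gl.dim}A\leq 1$ kills the right-hand side. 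That shift isomorphism is precisely the technical lemma underlying \cite[VI, Theorem 5.6]{AsSS}, of which Corollary 5.7 is the hereditary special case; it can also be read off the derived equivalence $\RHom_A(T,-)$, under which $N\in\mathcal{T}(T)$ is sent to $\mathrm{Hom}_A(T,N)$ concentrated in degree $0$ and $L\in\mathcal{F}(T)$ to $\mathrm{Ext}^1_A(T,L)$ concentrated in degree $1$, so the $\mathrm{Ext}$-degree on the $A$-side is one higher than on the $B$-side. The only slight imprecision in your write-up is the parenthetical sketch of the shift isomorphism: applying $\mathrm{Hom}_A(-,L)$ to an $\mathrm{add}T$-resolution of $N$ gives you $\mathrm{Ext}^{\bullet}_A(N,L)$ in shifted degrees, but one must also identify this with the complex actually computing $\mathrm{Ext}^{\bullet}_B(\mathrm{Hom}_A(T,N),\mathrm{Ext}^1_A(T,L))$ via the adjunction for $-\otimes_B T$; you explicitly flag this as the main obstacle, and the surrounding argument is sound.
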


\begin{proposition}\label{4.5}\cite[VI, Proposition 5.2]{AsSS}
Let $A$ be an algebra, $T$ be a splitting tilting module in $\mathrm{mod}A$,
and $B=\mathrm{End}_{A}T$. Then any almost split sequence in
$\mathrm{mod}B$ lies entirely in either $\mathcal{X}(T)$ or
$\mathcal{Y}(T)$, or else it is of the form
$$0\rightarrow \mathrm{Hom}_{A}(T,I) \rightarrow \mathrm{Hom}_{A}(T,I/\mathrm{soc} I)\oplus \mathrm{Ext}_{A}^{1}(T,\mathrm{rad} P) \rightarrow \mathrm{Ext}_{A}^{1}(T,P) \rightarrow 0,$$
where $P$ is an indecomposable projective $A$-module not lying in $\mathrm{add}T$ and $I$ is the indecomposable injective $A$-module such that $P/\mathrm{rad} P \cong \mathrm{soc} I$.
\end{proposition}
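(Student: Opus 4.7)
The plan is to first reduce, via the splitting torsion pair $(\mathcal{X}(T),\mathcal{Y}(T))$ in $\mathrm{mod}\,B$, to a ``connecting'' configuration where the end-terms of $\eta$ lie on opposite sides of the torsion pair, and then to translate the data back to $\mathrm{mod}\,A$ via the Brenner--Butler equivalences of Theorem \ref{4.3}. Since $T$ is splitting, $\mathcal{X}(T)$ is the torsion class and $\mathcal{Y}(T)$ the torsion-free class of a splitting torsion pair, so every indecomposable $B$-module lies in exactly one of the two classes and $\mathrm{Ext}^{1}_{B}(\mathcal{Y}(T),\mathcal{X}(T))=0$. Given an almost split sequence $\eta\colon 0\to X\to Y\to Z\to 0$ in $\mathrm{mod}\,B$, a case analysis on the classes of the indecomposables $X$ and $Z$ shows: if $X,Z\in\mathcal{X}(T)$ (respectively both in $\mathcal{Y}(T)$), closure under extensions forces $Y$ into the same class and $\eta$ lies entirely there; the configuration $X\in\mathcal{X}(T)$, $Z\in\mathcal{Y}(T)$ is excluded, for it would yield $\mathrm{Ext}^{1}_{B}(Z,X)=0$ and split $\eta$. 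Only the connecting configuration $X\in\mathcal{Y}(T)$, $Z\in\mathcal{X}(T)$ remains, and this is the case to analyze.

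In the connecting case, Theorem \ref{4.3} gives $X\cong\mathrm{Hom}_{A}(T,L)$ for a unique indecomposable $L\in\mathcal{T}(T)$, and $Z\cong\mathrm{Ext}^{1}_{A}(T,N)$ for a unique indecomposable $N\in\mathcal{F}(T)$. To identify $L$ and $N$ I would compute $\tau_{B}Z$ via the Nakayama functor applied to a minimal projective presentation of $Z$ in $\mathrm{mod}\,B$. Such a presentation is obtained by applying $\mathrm{Hom}_{A}(T,-)$ to a projective presentation of $N$ in $\mathrm{mod}\,A$; the appearance of $\mathrm{Ext}^{1}_{A}(T,P)$ as the last term, with $\mathrm{Ext}^{1}_{A}(T,P)\twoheadrightarrow Z\neq 0$, forces $P$ to be an indecomposable projective $A$-module not lying in $\mathrm{add}\,T$, and the requirement $\tau_{B}Z=X\in\mathcal{Y}(T)$ then identifies $N$ as $\mathrm{rad}\,P$ and $L$ as the indecomposable injective $A$-module $I$ whose socle is the simple top $P/\mathrm{rad}\,P$ of $P$, via the standard compatibility between the Nakayama functor on $\mathrm{mod}\,B$ and the top/socle partnering of indecomposable projective and injective $A$-modules.

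Finally, with $L=I$ and $N=\mathrm{rad}\,P$ identified, I would reconstruct $\eta$ by applying $\mathrm{Hom}_{A}(T,-)$ to the two short exact sequences
\[
0\to\mathrm{soc}\,I\to I\to I/\mathrm{soc}\,I\to 0\qquad\text{and}\qquad 0\to\mathrm{rad}\,P\to P\to P/\mathrm{rad}\,P\to 0
\]
and splicing along the isomorphism $\mathrm{soc}\,I\cong P/\mathrm{rad}\,P$. Using $\mathrm{Ext}^{1}_{A}(T,I)=0$ (injectivity of $I$) and tracing the connecting maps yields the claimed four-term exact sequence, whose middle term $\mathrm{Hom}_{A}(T,I/\mathrm{soc}\,I)\oplus\mathrm{Ext}^{1}_{A}(T,\mathrm{rad}\,P)$ decomposes as the $\mathcal{Y}(T)$-part and the $\mathcal{X}(T)$-part of $Y$ under the splitting torsion pair. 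The main obstacle is the identification step in the middle paragraph: showing that the Nakayama translate on the projective presentation of $Z$ picks out precisely an indecomposable injective $A$-module via the socle-top relation $\mathrm{soc}\,I\cong P/\mathrm{rad}\,P$, which requires careful bookkeeping with the long exact sequences arising from the tilting theorem.
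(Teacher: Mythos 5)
The paper does not prove this proposition at all---it is taken verbatim from Assem--Simson--Skowro\'nski, VI.5.2---so I am evaluating your plan on its own.

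Your first paragraph is correct: the splitting torsion pair $(\mathcal{X}(T),\mathcal{Y}(T))$ in $\mathrm{mod}\,B$ forces each indecomposable end term of an almost split sequence $\eta$ into one class, both-in-one-class sequences stay in that class by extension-closure, the configuration $X\in\mathcal{X}(T)$, $Z\in\mathcal{Y}(T)$ would split $\eta$, and only $X\in\mathcal{Y}(T)$, $Z\in\mathcal{X}(T)$ remains. This is exactly the right reduction.

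The middle step, however, contains a concrete error. You propose to read off a projective $B$-presentation of $Z=\mathrm{Ext}^1_A(T,N)$ ``by applying $\mathrm{Hom}_A(T,-)$ to a projective presentation of $N$ in $\mathrm{mod}\,A$.'' That cannot work: since $N\in\mathcal{F}(T)$ we have $\mathrm{Hom}_A(T,N)=0$, so the resulting complex does not present $Z$, and worse, for a projective $A$-module $P_i$ the $B$-module $\mathrm{Hom}_A(T,P_i)$ is \emph{not} projective unless $P_i\in\mathrm{add}\,T$---the projectives in $\mathrm{mod}\,B$ come exactly from $\mathrm{Hom}_A(T,-)$ applied to $\mathrm{add}\,T$, not from applying it to projective $A$-modules. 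So this route to $\tau_B Z$ via the Nakayama functor on a projective $B$-presentation is not set up correctly, and you have flagged this yourself as ``the main obstacle'' without resolving it. The actual argument in the source identifies the connecting sequence by a more careful analysis of the Auslander--Reiten quiver, not by computing $\tau_B Z$ from a presentation of $N$.

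Two further gaps: the ``splicing'' at the end does not directly yield the displayed sequence. Applying $\mathrm{Hom}_A(T,-)$ to the two short exact sequences gives two long exact sequences both ending in surjections onto $\mathrm{Ext}^1_A(T,S)$ (with $S=\mathrm{soc}\,I\cong P/\mathrm{rad}\,P$), and the claimed three-term sequence is a pullback/pushout along those, not a splice; even the exactness at the middle term already needs the observation that $\mathrm{Hom}_A(T,P)\to\mathrm{Hom}_A(T,S)$ vanishes because $P\notin\mathrm{add}\,T$. Finally, you never argue that the constructed sequence is almost split---that is a separate verification and is the heart of the statement, not a formality. As it stands, your plan sketches the correct reduction but leaves the identification, the construction, and the almost-split property essentially unproved.
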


Keeping the symbols as above, we can recall the following
proposition.

\begin{proposition}\label{4.6}\cite[VI, Lemma 5.3]{AsSS}
Let $0\rightarrow L \rightarrow M \rightarrow N \rightarrow 0$ be an
almost split sequence in $\mathrm{mod} B$.
 \begin{enumerate}[\rm(1)]
\item If $L,M,N\in \mathcal{Y}(T)$ , then $0\rightarrow L\otimes _{B}T \rightarrow M\otimes _{B}T \rightarrow N\otimes _{B}T \rightarrow 0$ is almost split in $\mathcal{T}(T)$.
\item If $L,M,N\in \mathcal{X}(T)$, then $0\rightarrow \mathrm{Tor}_{1}^{B}(L,T) \rightarrow \mathrm{Tor}_{1}^{B}(M,T) \rightarrow \mathrm{Tor}_{1}^{B}(N,T) \rightarrow 0$ is almost split in $\mathcal{F}(T)$.

\end{enumerate}
\end{proposition}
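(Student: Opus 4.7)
The natural strategy is to invoke the Brenner--Butler tilting theorem (Theorem \ref{4.3}): the functor $-\otimes_B T$ restricts to an equivalence $\mathcal{Y}(T)\xrightarrow{\sim}\mathcal{T}(T)$, and $\mathrm{Tor}_1^B(-,T)$ restricts to an equivalence $\mathcal{X}(T)\xrightarrow{\sim}\mathcal{F}(T)$. Since being an almost split sequence is a purely categorical property, it is preserved under equivalences of additive categories, so the genuine task reduces to verifying that each functor sends the given short exact sequence to another short exact sequence supported entirely in the target subcategory.

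For (1), I would invoke the long exact Tor sequence of $0\to L\to M\to N\to 0$ against $T$:
\[
\cdots\to\mathrm{Tor}_1^B(N,T)\to L\otimes_B T\to M\otimes_B T\to N\otimes_B T\to 0.
\]
Because $N\in\mathcal{Y}(T)=\{Y\mid\mathrm{Tor}_1^B(Y,T)=0\}$, the term $\mathrm{Tor}_1^B(N,T)$ vanishes, and one obtains a short exact sequence $0\to L\otimes_B T\to M\otimes_B T\to N\otimes_B T\to 0$ with all three terms in $\mathcal{T}(T)$. The original sequence, viewed through the equivalence $\mathcal{Y}(T)\simeq\mathcal{T}(T)$, is already almost split in $\mathcal{Y}(T)$ (in particular its end terms remain indecomposable and it remains non-split), so its image under $-\otimes_B T$ is an almost split sequence in $\mathcal{T}(T)$.

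Part (2) proceeds dually, using the additional input from Theorem \ref{4.3}(1) that $_B T$ is a tilting $B^{\mathrm{op}}$-module, so $\mathrm{Tor}_i^B(-,T)=0$ for $i\ge 2$. The Tor long exact sequence then collapses to the six-term form
\[
0\to\mathrm{Tor}_1^B(L,T)\to\mathrm{Tor}_1^B(M,T)\to\mathrm{Tor}_1^B(N,T)\to L\otimes_B T\to M\otimes_B T\to N\otimes_B T\to 0,
\]
and since $L,M,N\in\mathcal{X}(T)=\{X\mid X\otimes_B T=0\}$ the last three terms vanish, yielding the required $0\to\mathrm{Tor}_1^B(L,T)\to\mathrm{Tor}_1^B(M,T)\to\mathrm{Tor}_1^B(N,T)\to 0$ in $\mathcal{F}(T)$. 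The equivalence $\mathrm{Tor}_1^B(-,T)\colon\mathcal{X}(T)\to\mathcal{F}(T)$ then transports the almost split property just as in (1).

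The delicate point, which I regard as the main obstacle, is verifying cleanly that an almost split sequence really is sent to an almost split sequence by the restricted equivalence. Given any non-isomorphism $f\colon Z\to N\otimes_B T$ with $Z\in\mathcal{T}(T)$, one must factor $f$ through $M\otimes_B T$; applying the quasi-inverse $\mathrm{Hom}_A(T,-)$ yields a non-isomorphism $\mathrm{Hom}_A(T,Z)\to N$ in $\mathrm{mod}\,B$, to which the almost split property of the original sequence in $\mathrm{mod}\,B$---valid against \emph{all} test objects, not only those in $\mathcal{Y}(T)$---applies to produce a factorization through $M$. Tensoring this factorization with $T$ recovers the desired factorization through $M\otimes_B T$, and the analogous argument handles (2).
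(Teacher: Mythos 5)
The paper does not prove this statement; it is quoted directly from \cite[Chapter VI, Lemma 5.3]{AsSS} without argument, so there is no proof of the paper's own to compare against. Your reconstruction is correct and is essentially the textbook argument: the long exact $\mathrm{Tor}^B_{\bullet}(-,T)$ sequence gives exactness of the image (using $\mathrm{Tor}_1^B(N,T)=0$ in case (1), and $\mathrm{pd}\,{}_BT\le 1$ together with $L\otimes_BT=M\otimes_BT=N\otimes_BT=0$ in case (2)), and the Brenner--Butler equivalences $\mathcal{Y}(T)\simeq\mathcal{T}(T)$, $\mathcal{X}(T)\simeq\mathcal{F}(T)$ transport the almost split property. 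One small precision worth adding: the lifting property defining an almost split sequence concerns maps that are \emph{not split epimorphisms} rather than non-isomorphisms (these coincide only when the source is indecomposable), and you should say explicitly that additive equivalences preserve and reflect split epimorphisms, non-splitness of short exact sequences, and indecomposability of the end terms, so that every defining condition of almost split transfers across the equivalence; with these routine remarks made explicit your argument is complete.
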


Let $Q$ be a finite, connected, and acyclic quiver. Recall that an
algebra $B$ is called an {\it iterated tilted algebra of type $Q$}
if there is a series of algebras $A_0=KQ,A_1,\cdots, A_m=B$ such
that $T_i$ is a splitting classical tilting module over $A_i$ and
$A_{i+1}=\End_{A_i}T_i$ for $0\leq i \leq m-1$. Now we are in a position to show the
main result of this section.

\begin{theorem}\label{4.7}
Let $B$ be an iterated tilted algebra of Dynkin type $Q$. Then every
indecomposable module in $\mathrm{mod}B$ is $\tau$-rigid.
\end{theorem}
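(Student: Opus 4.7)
The plan is to proceed by induction on $m$. For the base case $m=0$, the algebra $A_0=KQ$ is hereditary of Dynkin type, so every indecomposable $X\in\mathrm{mod}\,A_0$ is rigid (i.e.\ $\Ext^1_{KQ}(X,X)=0$) by the classical parametrization of indecomposables by positive roots of the Dynkin root system; combined with the Auslander--Reiten formula for a hereditary algebra, this yields $\Hom_{KQ}(X,\tau X)=0$, so $X$ is $\tau$-rigid (the projective case being immediate, as $\tau$ vanishes there).

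For the inductive step, set $A=A_{m-1}$, $T=T_{m-1}$, and $B=A_m=\End_A(T)$, noting that $T$ is a splitting classical tilting $A$-module by the definition of iterated tilted algebra. Let $M$ be an indecomposable $B$-module; if $M$ is projective then $\tau_B M=0$ and there is nothing to show, so assume $M$ is non-projective and consider its Auslander--Reiten sequence $0\to\tau_B M\to E\to M\to0$. By Proposition~\ref{4.5}, this sequence falls into exactly one of three configurations: (i) it lies entirely in $\mathcal{Y}(T)$, (ii) it lies entirely in $\mathcal{X}(T)$, or (iii) it is the connecting sequence described in that proposition.

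In case (i), $M\cong\Hom_A(T,N)$ for a unique indecomposable $N\in\mathcal{T}(T)\subseteq\mathrm{mod}\,A$; applying $-\otimes_B T$ and invoking Proposition~\ref{4.6}(1) yields an almost split sequence $0\to(\tau_B M)\otimes_B T\to E\otimes_B T\to N\to 0$ in $\mathrm{mod}\,A$, so by uniqueness of the Auslander--Reiten translate we obtain $(\tau_B M)\otimes_B T\cong\tau_A N$, i.e.\ $\tau_B M\cong\Hom_A(T,\tau_A N)$. The equivalence $\Hom_A(T,-)\colon\mathcal{T}(T)\to\mathcal{Y}(T)$ from Theorem~\ref{4.3}(2) then identifies $\Hom_B(M,\tau_B M)$ with $\Hom_A(N,\tau_A N)$, which vanishes by the inductive hypothesis applied to the indecomposable $A$-module $N$. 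Case (ii) is handled dually, using Proposition~\ref{4.6}(2) and the equivalence $\Ext^1_A(T,-)\colon\mathcal{F}(T)\to\mathcal{X}(T)$ of Theorem~\ref{4.3}(3).

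The principal obstacle is case (iii), where $M=\Ext^1_A(T,P)\in\mathcal{X}(T)$ while $\tau_B M=\Hom_A(T,I)\in\mathcal{Y}(T)$, with $P$ and $I$ related as in Proposition~\ref{4.5}. Here $M$ and $\tau_B M$ sit on opposite sides of the induced torsion pair in $\mathrm{mod}\,B$, so the inductive equivalence cannot be applied directly. The saving observation is precisely that $(\mathcal{X}(T),\mathcal{Y}(T))$ is a torsion pair in $\mathrm{mod}\,B$, whence $\Hom_B(\mathcal{X}(T),\mathcal{Y}(T))=0$; thus $\Hom_B(M,\tau_B M)=0$ and $M$ is $\tau$-rigid, completing the induction.
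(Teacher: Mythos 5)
Your proof is correct and follows essentially the same route as the paper's: induction on the number of tilting steps, the trichotomy of Auslander--Reiten sequences provided by Proposition~\ref{4.5} (entirely in $\mathcal{Y}(T)$, entirely in $\mathcal{X}(T)$, or the connecting sequence), the Brenner--Butler equivalences of Theorem~\ref{4.3} together with Proposition~\ref{4.6} to transport the first two cases down to the previous algebra in the chain, and the torsion-pair orthogonality $\Hom_B(\mathcal{X}(T),\mathcal{Y}(T))=0$ to dispose of the connecting case. The only difference is cosmetic: you anchor the induction at $m=0$ with the hereditary algebra $A_0=KQ$ (every indecomposable rigid since $Q$ is Dynkin), whereas the paper's base case is the tilted algebra $A_1$, handled by the identical three-way analysis with the directedness of $A_0$ supplying the required vanishing.
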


\begin{proof}

Assume that $B=A_m$ is the iterated tilted algebra of Dynkin type
$Q$ with the corresponding splitting tilting modules $T_i$ for
$0\leq i\leq m-1$. We prove the assertion by induction on $m$.

If $m=1$, then $B=A_1=\End_{A_0}T_0$ is a tilted algebra of Dynkin
type.

Let $N$ be any indecomposable module in $\mathrm{mod} B$. By
Proposition \ref{4.4}, $N$ is either in $\mathcal{X}(T)$ or in
$\mathcal{Y}(T)$.

If $N$ is projective, then there is nothing to show. Now assume that
$N$ is not projective. Then there is an almost split sequence
$0\rightarrow L\rightarrow M\rightarrow N\rightarrow 0$. By
Propostion \ref{4.5}, the exact sequence is either in
$\mathcal{Y}(T)$, $\mathcal{X}(T)$ or a connecting sequence.

(1) If $0\rightarrow L \rightarrow M \rightarrow N \rightarrow 0$ in $\mathcal{Y}(T)$, then
 $0\rightarrow L\otimes _{B}T \rightarrow M\otimes _{B}T \rightarrow N\otimes _{B}T \rightarrow 0$
 is Auslander-Reiten sequence in $\mathrm{mod}A_{0}$ by Proposition \ref{4.6}. Since $A_{0}$ is the path algebra of a Dynkin quiver, $A_{0}$ is a representation-finite hereditary algebra. This implies every indecomposable module in $\mathrm{mod}A_{0}$ is directing and thus $\tau$-rigid. By Theorem \ref{4.3}, $\mathrm{Hom}_{B}(N,L)\simeq \mathrm{Hom}_{A_{0}}(N\otimes _{B}T,L\otimes _{B}T)=0$, hence $N$ is $\tau$-rigid.

(2) If $0\rightarrow L \rightarrow M \rightarrow N \rightarrow 0$ in
$\mathcal{X}(T)$, then $0\rightarrow \mathrm{Tor}_{1}^{B}(L,T)
\rightarrow \mathrm{Tor}_{1}^{B}(M,T) \rightarrow
\mathrm{Tor}_{1}^{B}(N,T) \rightarrow 0$ is Auslander-Reiten
sequence in $\mathrm{mod}A_{0}$ by Proposition \ref{4.6}. As we showed in (1) every indecomposable module in $\mathrm{mod}A_{0}$ is $\tau$-rigid. By Theorem \ref{4.3}, $\mathrm{Hom}_{B}(N,L)\simeq
\mathrm{Hom}_{A_{0}}(\mathrm{Tor}_{1}^{B}(N,T),\mathrm{Tor}_{1}^{B}(L,T))=0$,
hence $N$ is $\tau$-rigid.

(3) If $0\rightarrow L \rightarrow M \rightarrow N \rightarrow 0$ is
a connecting sequence, then $N\simeq \mathrm{Ext}_{A_{0}}^{1}(T,P(a))\in
\mathcal{X}(T)$, $L\simeq \mathrm{Hom}_{A_{0}}(T,I(a))\in
\mathcal{Y}(T)$ by Proposition \ref{4.5}.  Thus,
$\mathrm{Hom}_{B}(N,\tau N)=\mathrm{Hom}_{B}(N,L)=0$, $N$ is
$\tau$-rigid.

Now assume the assertion holds for $B=A_m$. In the following we show
the assertion holds for $B=A_{m+1}$.

By induction assumption, every indecomposable module in $\mathrm{mod}A_m$ is
$\tau$-rigid. For any indecomposable module $N\in\mathrm{mod} B$, if $N$ is
projective, then there is nothing to show. We assume that $N$
 is not projective. Since $T_m$ is splitting, then $N$ is either in $\mathcal{Y}(T_m)$ or in
 $\mathcal{X}(T_m)$. Putting $T=T_m$ in the proof of the case $m=1$,
 one gets the desired result.
\end{proof}
\begin{example}
Let $A_{0}=KQ$ be the algebra given by the quiver $Q: 1 \rightarrow 2
\rightarrow 3 \rightarrow 4$ and let $T_{0}$ be the tilting module
${\begin{smallmatrix}1\\2\\3\\4 \end{smallmatrix} \oplus \begin{smallmatrix}1\end{smallmatrix} \oplus \begin{smallmatrix}1\\2\end{smallmatrix} \oplus\begin{smallmatrix}4\end{smallmatrix}}$ in $\mathrm{mod} A_0$. Then
 \begin{enumerate}[\rm(1)]
\item $A_{1}=\mathrm{End}_{A_{0}}T_{0}$ is given by the quiver $Q':
1\xrightarrow {\alpha_{1}} 2 \xrightarrow {\alpha_{2}} 3 \xrightarrow {\alpha_{3}} 4$ with
 the relation $\alpha_{2}\alpha_{3}=0$ and $\mathrm{gl.dim}A_{1}=2$.
\item $T_{1}={\begin{smallmatrix}1\\2\\3 \end{smallmatrix} \oplus \begin{smallmatrix}2\end{smallmatrix} \oplus \begin{smallmatrix}3\\4\end{smallmatrix} \oplus\begin{smallmatrix}4\end{smallmatrix}}$ in $\mathrm{mod}A_{1}$ is a classical tilting module and $A_{2}=\mathrm{End}_{A_{1}}T_{1}$ is given by the quiver $Q'': 1\xrightarrow {\beta_{1}} 2 \xrightarrow {\beta_{2}} 3 \xrightarrow {\beta_{3}} 4$ with relations $\beta_{1}\beta_{2}=0$ and $\beta_{2}\beta_{3}=0$.
\item $\mathrm{gl.dim}A_{2}=3$ implies that $A_{2}$ is iterated tilted but not tilted.
\item The Auslander-Reiten quiver of $A_{2}$ is as
follows:
$$\begin{xy}
 (-36,12)*+{\begin{smallmatrix}4\end{smallmatrix}}="1",
 (-24,24)*+{\begin{smallmatrix}3\\4\end{smallmatrix}}="2",
 (-12,12)*+{\begin{smallmatrix}3\end{smallmatrix}}="3",
(0,0)*+{\begin{smallmatrix} 2\\3\end{smallmatrix}}="4",
(12,12)*+{\begin{smallmatrix} 2\end{smallmatrix}}="5",
(24,24)*+{ \begin{smallmatrix}1\\2\end{smallmatrix}}="6",
(36,12)*+{\begin{smallmatrix} 1\end{smallmatrix}}="7",
\ar"1";"2", \ar"2";"3",  \ar"3";"4",
\ar"4";"5", \ar"5";"6", \ar"6";"7",
\end{xy}$$
\end{enumerate}
One can show that every indecomposable module in $\mathrm{mod}A_{2}$ is $\tau$-rigid.
\end{example}

\section{$\tau$-tilting modules and homological dimension}
In this section, we give the relationship between $\tau$-tilting
modules and homological dimension, which is an analog of that
of classical tilting modules (see \cite[Lemma 4.1]{AsSS} for details).

 For an $A$-module $M$, denote by $M^{\bot_0}$ (resp. ${^{\bot_0}M}$) the subcategory consisting
 of $N$ such that $\Hom_{A}(M,N)=0$ (resp.  $\Hom_{A}(N,M)=0$). Firstly, we introduce the following lemma known as Wakamastu's Lemma.

\begin{lemma}
 \begin{enumerate}[\rm(1)]
\item Let $\theta$: $0 \rightarrow Y \rightarrow T' \xrightarrow {g} X$ be an exact sequence in $\mathrm{mod} A$,
where $T$ is $\tau$-rigid, and $g$: $T'\rightarrow X$ is a right
$\mathrm{add}T$-approximation. Then we have $Y\in {^{\bot_{0}}(\tau
T)}$.
\item Let $\vartheta$: $ Y \xrightarrow {f} U \rightarrow Z \rightarrow 0$ be an exact sequence in $\mathrm{mod} A$,
 where $T$ is $\tau$-rigid, $U\in \mathrm{add}\tau T$, and $f$: $Y \rightarrow U$ is a left $(\mathrm{add}\tau T)$-approximation.
  Then we have $Z\in {T^{\bot_{0}}}$.

\end{enumerate}
\end{lemma}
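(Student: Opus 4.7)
The plan for part (1) is to verify the hypothesis of Proposition 2.5(1), namely to show $\mathrm{Ext}^{1}_{A}(T,\mathrm{Fac}\,Y)=0$. Writing $X':=\mathrm{Im}\,g$, the sequence $\theta$ gives a short exact sequence $0\to Y\to T'\to X'\to 0$. For an arbitrary $M\in\mathrm{Fac}\,Y$, fix a surjection $p\colon Y^{n}\twoheadrightarrow M$ and form the pushout of $p$ along the inclusion $Y^{n}\hookrightarrow (T')^{n}$. This produces a short exact sequence
\[
0\to M\to N\to (X')^{n}\to 0
\]
in which $N$ is a quotient of $(T')^{n}$, so $N\in\mathrm{Fac}\,T$ and Proposition 2.5(2) yields $\mathrm{Ext}^{1}_{A}(T,N)=0$. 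Since $g$ is a right $\mathrm{add}\,T$-approximation, every map $T\to (X')^{n}\subseteq X^{n}$ lifts through the surjection $(T')^{n}\twoheadrightarrow (X')^{n}$; composing such a lift with the pushout map $(T')^{n}\twoheadrightarrow N$ shows that $\mathrm{Hom}_{A}(T,N)\to\mathrm{Hom}_{A}(T,(X')^{n})$ is surjective. Applying $\mathrm{Hom}_{A}(T,-)$ to the row displayed above then forces $\mathrm{Ext}^{1}_{A}(T,M)=0$, and Proposition 2.5(1) delivers $\mathrm{Hom}_{A}(Y,\tau T)=0$.

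For part (2), I would dualize throughout: pushouts become pullbacks, $\mathrm{Fac}$ becomes $\mathrm{Sub}$, the right $\mathrm{add}\,T$-approximation becomes the left $\mathrm{add}\,\tau T$-approximation, and Proposition 2.5 is replaced by its dualization under $\mathbb{D}\colon\mathrm{mod}\,A\to\mathrm{mod}\,A^{\op}$. Since $T$ is $\tau$-rigid, $\tau T$ is $\tau^{-1}$-rigid, and the dualized Proposition 2.5(2) yields $\mathrm{Ext}^{1}_{A}(\mathrm{Sub}\,\tau T,\tau T)=0$. Writing $Y':=\mathrm{Im}\,f$ one has $0\to Y'\to U\to Z\to 0$; for $M\in\mathrm{Sub}\,Z$ with $M\hookrightarrow Z^{n}$, the pullback of $U^{n}\twoheadrightarrow Z^{n}$ along $M\hookrightarrow Z^{n}$ produces a short exact sequence $0\to (Y')^{n}\to N\to M\to 0$ with $N\hookrightarrow U^{n}$, so $N\in\mathrm{Sub}\,\tau T$ and $\mathrm{Ext}^{1}_{A}(N,\tau T)=0$. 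The left $\mathrm{add}\,\tau T$-approximation property of $f$ makes $\mathrm{Hom}_{A}(N,\tau T)\to\mathrm{Hom}_{A}((Y')^{n},\tau T)$ surjective, yielding $\mathrm{Ext}^{1}_{A}(M,\tau T)=0$, and the dualized Proposition 2.5(1) then gives $\mathrm{Hom}_{A}(T,Z)=0$.

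The main technical obstacle is the pushout (dually the pullback) step. A direct application of $\mathrm{Hom}_{A}(T,-)$ to the sequence $0\to Y\to T'\to X'\to 0$ only yields the weaker vanishing $\mathrm{Ext}^{1}_{A}(T,Y)=0$, whereas Proposition 2.5(1) demands the full $\mathrm{Fac}$-closed version $\mathrm{Ext}^{1}_{A}(T,\mathrm{Fac}\,Y)=0$; the pushout construction, combined with the closure of $\mathrm{Fac}\,T$ under quotients, is precisely what bridges this gap, and the same idea in dual form does the work for $\mathrm{Sub}$.
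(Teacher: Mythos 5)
Your proof of part (1) is correct and genuinely different from the paper's treatment: the paper simply cites \cite{AIR} for this statement, whereas you give a self-contained argument built on the pushout construction together with Proposition 2.5. The key trick—pushing $Y^{n}\twoheadrightarrow M$ along $Y^{n}\hookrightarrow (T')^{n}$ to land the middle term $N$ in $\mathrm{Fac}\,T$, then using the approximation property of $g$ to make $\Hom_{A}(T,N)\to\Hom_{A}(T,(X')^{n})$ surjective—is sound, and the long exact sequence then kills $\Ext_{A}^{1}(T,M)$. For part (2) the paper takes a different route from you: it replaces $Y$ by $\mathrm{Im}\,f$, applies $\Hom_{A}(T,-)$ to $0\to Y\to U\to Z\to 0$, observes $\Hom_{A}(T,U)=0$, and then uses Auslander--Reiten duality to convert the surjectivity of $\Hom_{A}(U,\tau T)\to\Hom_{A}(Y,\tau T)$ into injectivity of $\Ext_{A}^{1}(T,Y)\to\Ext_{A}^{1}(T,U)$. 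Your approach instead dualizes your own part (1) wholesale through $\mathbb{D}$ and Proposition 2.5; this is a valid alternative strategy, and your pullback argument does correctly establish $\Ext_{A}^{1}(M,\tau T)=0$ for every $M\in\mathrm{Sub}\,Z$.

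There is, however, a gap at the very last step of part (2). The dualized Proposition 2.5(1) reads: $\Hom_{A}(\tau^{-1}X,Y)=0$ if and only if $\Ext_{A}^{1}(\mathrm{Sub}\,Y,X)=0$. Applying it with $X=\tau T$, $Y=Z$ gives $\Hom_{A}(\tau^{-1}\tau T,Z)=0$, and $\tau^{-1}\tau T$ is only the non-projective part of $T$. If $T$ has a nonzero projective direct summand $P$, the conclusion $Z\in T^{\perp_{0}}$ also requires $\Hom_{A}(P,Z)=0$, which your chain of implications does not deliver. The fix is short but does need to be said: since $T$ is $\tau$-rigid, $\Hom_{A}(P,\tau T)=0$, so $\Hom_{A}(P,U)=0$; as $\Hom_{A}(P,-)$ is exact and $U\twoheadrightarrow Z$, this forces $\Hom_{A}(P,Z)=0$. (The paper's AR-duality proof avoids this issue automatically, because the projective summand contributes zero to both $\Ext^{1}$ terms.) With that one remark added, your argument for part (2) is complete.
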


\begin{proof}
(1) is given by Adachi, Iyama and Reiten in \cite{AIR}. We only
prove (2).

Replacing $Y$ by $\mathrm{Ker}f$, we can assume that $f$ is an
injective. We apply $\mathrm{Hom}_{A}(T,-)$ to $\vartheta$ and get
the exact sequence
$$0= \mathrm{Hom}_{A}(T, U)\rightarrow  \mathrm{Hom}_{A}(T, Z)  \rightarrow \mathrm{Ext}_{A}^{1}(T, Y) \xrightarrow {\mathrm{Ext}_{A}^{1}(T, f)} \mathrm{Ext}_{A}^{1}(T, U)$$
where we have $\mathrm{Hom}_{A}(T, U)=0$ because $U\in
\mathrm{add}\tau T$. Since $f$: $Y\rightarrow U$ is a left
$(\mathrm{add}\tau T)$-approximation, the induced map $(f,\tau T)$:
$\mathrm{Hom}_{A}(U,\tau T)\rightarrow \mathrm{Hom}_{A}(Y,\tau T)$
is surjective. Then the induced map
$\overline{\mathrm{Hom}}_{A}(U,\tau T)\rightarrow
\overline{\mathrm{Hom}}_{A}(Y,\tau T)$ of the maps modulo injectives
is surjective. By the Auslander-Reiten duality, the map
$\mathrm{Ext}_{A}^{1}(T,f): \mathrm{Ext}_{A}^{1}(T,Y)\rightarrow \mathrm{Ext}_{A}^{1}(T,U)$
is injective. It follows that $\mathrm{Hom}_{A}(T, Z)=0$.
\end{proof}

Dually, one can show Wakamastu's Lemma in terms of
$\tau^{-1}$-rigid modules.

Recall from \cite[ChapterVI, Lemma 4.1]{AsSS}, for an algebra $A$,
$T$ a classical tilting module in $\mathrm{mod}A$ and $B=\mathrm{End_A} T$, if $M\in
\mathrm{Fac}T$, then $\pd_{B}\Hom_{A}(T,M)\leq
 \mathrm{pd}_{A}M$ holds. We prove an analog result in terms of
 $\tau$-tilting modules as follows.

\begin{theorem}\label{3.2} Let $A$ be an algebra, $T$ be a $\tau$-tilting module in $\mathrm{mod} A$
 and $B=\mathrm{End}_{A} T$. For any $M\in \mathrm{Fac}T$, we have
 \begin{enumerate}[\rm(1)]
\item   If $\pd_{A} M\leq 1$ holds, then $\mathrm{pd}_{B}\mathrm{Hom}_{A}(T,M)\leq \mathrm{pd}_{A}M$
holds.
\item   If $\Ext_{A}^{i}(T,M\oplus T)=0$ holds
for any $i\geq 1$, then $\mathrm{pd}_{B}\mathrm{Hom}_{A}(T, M)\leq
\mathrm{pd}_{A}M$ holds.

\end{enumerate}
\end{theorem}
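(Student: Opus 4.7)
The plan is to reduce both parts to constructing an exact sequence
\[
0\to T_n\to\cdots\to T_0\to M\to 0
\]
in $\mathrm{mod}A$, with $T_i\in\mathrm{add}T$ and $n\le \pd_A M$, along which $\Hom_A(T,-)$ remains exact. Since $\Hom_A(T,-)$ restricts to an equivalence $\mathrm{add}T\xrightarrow{\sim}\mathrm{proj}B$, such a sequence becomes a projective resolution of $\Hom_A(T,M)$ of length at most $\pd_A M$, giving the desired bound on $\pd_B\Hom_A(T,M)$.

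The resolution will be built iteratively. Starting from $M\in\mathrm{Fac}T$, take a minimal right $\mathrm{add}T$-approximation $T_0\to M$, which is surjective since $M\in\mathrm{Fac}T$; its kernel $K_0$ lies in $^{\bot_0}(\tau T)$ by the preceding Wakamatsu-type lemma. Since $T$ is $\tau$-tilting, the standard identity $\mathrm{Fac}T={^{\bot_0}(\tau T)}$ holds (from \cite{AIR}), so $K_0\in\mathrm{Fac}T$, and the $\tau$-rigidity of $T$ gives $\Ext_A^1(T,K_0)=0$. Consequently $\Hom_A(T,-)$ preserves exactness of $0\to K_0\to T_0\to M\to 0$, and the process can be iterated on $K_0$.

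For Part~(1), $\pd_A M\le 1$; the case $\pd_A M=0$ is immediate since a projective in $\mathrm{Fac}T$ must lie in $\mathrm{add}T$. For $\pd_A M=1$, apply $\Hom_A(-,N)$ with $N\in\mathrm{Fac}T$ to $0\to K_0\to T_0\to M\to 0$: the long exact sequence reads $\Ext_A^1(T_0,N)\to\Ext_A^1(K_0,N)\to\Ext_A^2(M,N)=0$, and the leftmost term vanishes because $T_0\in\mathrm{add}T$ and $\Ext_A^1(T,\mathrm{Fac}T)=0$ by $\tau$-rigidity. Thus $K_0$ is Ext-projective in $\mathrm{Fac}T$, and for $\tau$-tilting $T$ the Ext-projectives of $\mathrm{Fac}T$ coincide with $\mathrm{add}T$ (from \cite{AIR}); hence $K_0\in\mathrm{add}T$ and the one-step resolution gives $\pd_B\Hom_A(T,M)\le 1$.

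For Part~(2), iterate the construction to produce syzygies $K_0,K_1,\ldots$, each in $\mathrm{Fac}T$ and each satisfying $\Ext_A^i(T,K_j\oplus T)=0$ for all $i\ge 1$ (the hypothesis propagates through the long exact sequence: for $i\ge 2$ from $\Ext_A^{i-1}(T,K_{j-1})=0=\Ext_A^i(T,T_j)$, and for $i=1$ from the approximation making $\Hom_A(T,T_j)\to\Hom_A(T,K_{j-1})$ surjective). After $n=\pd_A M$ steps one obtains an exact sequence $0\to K_{n-1}\to T_{n-1}\to\cdots\to T_0\to M\to 0$ with $T_i\in\mathrm{add}T$ and $K_{n-1}\in\mathrm{Fac}T$. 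The main obstacle is then to show $K_{n-1}\in\mathrm{add}T$: compare this sequence with a minimal projective resolution $0\to P_n\to\cdots\to P_0\to M\to 0$ of length $n$ via a generalised Schanuel argument, obtaining $K_{n-1}\oplus U\cong P_n\oplus V$ with $U,V\in\mathrm{add}(T\oplus A)$. Every indecomposable summand of $K_{n-1}$ is then either a summand of $T$ or an indecomposable projective, and in either case lies in $\mathrm{Fac}T\cap\mathrm{add}(T\oplus A)=\mathrm{add}T$ (a projective in $\mathrm{Fac}T$ is necessarily in $\mathrm{add}T$). Hence $K_{n-1}\in\mathrm{add}T$, the $\mathrm{add}T$-resolution of $M$ has length $\le n$, and applying $\Hom_A(T,-)$ yields a projective $B$-resolution of $\Hom_A(T,M)$ of length $\le\pd_A M$.
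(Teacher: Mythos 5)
Your Part~(1) is correct and matches the paper's argument: build the short exact sequence $0\to K_0\to T_0\to M\to 0$ from a right $\mathrm{add}T$-approximation, use $\pd_A M\le 1$ and $\Ext^1_A(T,\mathrm{Fac}T)=0$ to show $K_0$ is Ext-projective in $\mathrm{Fac}T$, and invoke the characterisation of Ext-projectives of $\mathrm{Fac}T$ as $\mathrm{add}T$. Your construction of the iterated $\mathrm{add}T$-resolution and the propagation of the vanishing $\Ext_A^i(T,K_j)=0$ in Part~(2) are also fine.

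The gap is the ``generalised Schanuel'' step. Schanuel's lemma applied to $0\to K\to T'\to N\to 0$ and $0\to K'\to P\to N\to 0$ produces $K\oplus P\cong K'\oplus T'$ only when \emph{both} pullback sequences split, which requires $\Ext^1_A(P,K)=0$ (free, since $P$ is projective) \emph{and} $\Ext^1_A(T',K')=0$. In your comparison the second condition amounts to $\Ext^1_A(T,\Omega^{j+1}M)=0$, i.e.\ that every map $T\to \Omega^j M$ factors through a projective cover; this is \emph{not} among the hypotheses and genuinely fails. Concretely, take $A=K(1\to 2\to 3)$, $T=P_1\oplus P_2\oplus S_2$ (a classical tilting, hence $\tau$-tilting, module with $\Ext^1_A(T,T)=0$), and $M=S_2\in\mathrm{Fac}T$ with $\Ext^1_A(T,M)=0$ and $\pd_A M=1$. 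Here $\Omega M=P_3=S_3$ and $\Ext^1_A(T,S_3)\supseteq\Ext^1_A(S_2,S_3)\neq 0$. The $\mathrm{add}T$-resolution is $0\to 0\to S_2\to S_2\to 0$ while the projective resolution is $0\to P_3\to P_2\to S_2\to 0$; Schanuel would force $P_2\cong P_3\oplus S_2$, which is false. So the isomorphism $K_{n-1}\oplus U\cong P_n\oplus V$ you assert simply does not follow, even though the theorem's conclusion is still true for this example.

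The paper sidesteps this entirely by staying inside the $\mathrm{add}T$-resolution: applying $\Hom_A(-,L_{t+1})$ to the sequence $\cdots\to T_1\to T_0\to M\to 0$ and using only $\Ext_A^k(T_i,L_{t+1})=0$ for $k\ge 1$ (which you have already established), one dimension-shifts to get $\Ext^1_A(L_t,L_{t+1})\cong\Ext^{t+1}_A(M,L_{t+1})=0$ since $\pd_A M=t$. Hence $0\to L_{t+1}\to T_t\to L_t\to 0$ splits, $L_t\in\mathrm{add}T$, and the truncated resolution does the job after applying $\Hom_A(T,-)$. Replacing your Schanuel step with this splitting argument closes the gap without changing anything else in your write-up.
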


\begin{proof}
(1) If $\pd_{A}M=0$, then $M\in \mathrm{Fac}T$ implies $M\in
\mathrm{add}T$. One gets $\mathrm{Hom}_{A}(T,M)$ is a projective
module in $\mathrm{mod}B$ since $\mathrm{Hom}_{A}(T,-)$ induces an equivalence between $\mathrm{add}T$ and $\mathrm{add}B$.

Now, assume $\pd_{A}M=1$. Since $M\in \mathrm{Fac}T$, by Lemma 4.1
we get a short exact sequence

$$0\rightarrow L\rightarrow T_{0} \rightarrow M \rightarrow0\ \ \ \ \ \ \ (4.1)$$
with $L \in {^{\bot_{0}}(\tau T)}=\mathrm{Fac}T$ .

Recall that $L\in \mathcal{C} \subseteq \mathrm{mod}A $ is Ext-projective if $\Ext_{A}^1(L,\mathcal{C})=0$. In the following we show $L\in \add T$, that is, $L$ is
Ext-projective in $\mathrm{Fac}T$.

For any $N\in\mathrm{Fac} T$, applying
the functor $\Hom(-,N)$ to the exact sequence (4.1), we get a long
exact sequence $\Ext_{A}^1(M,N)\rightarrow
\Ext_{A}^1(T_0,N)\rightarrow \Ext_{A}^{1}(L,N)\rightarrow
\Ext_{A}^2(M,N)$. Hence $\Ext_A^1(L,N)=0$ holds because of $\pd_A
M=1$ and $N\in \mathrm{Fac} T$. We are done.

 Applying the functor $\Hom_{A}(T,-)$ to the sequence (4.1) again,
 we get the assertion since $\Hom(T,-)$ is an equivalence between
 $\mathrm{add}T$ and $\mathrm{add}B$.

(2) If $\pd_{A}M=\infty$, then there is nothing to show.

Now we can assume that $\pd_{A}M=t<\infty$.  Since $M\in
\mathrm{Fac}T$, by Lemma 4.1 we get a short exact sequence
$0\rightarrow L\rightarrow T_{0} \rightarrow M \rightarrow0$ with $L
\in {^{\bot_{0}}(\tau T)}=\mathrm{Fac}T$, so $\Ext_{A}^{1}(T,L)=0$. Applying the functor
$\Hom_A(T,-)$ to the sequence (4.1), one gets
$\Ext_{A}^{i+1}(T,L)\simeq \Ext_{A}^{i}(T,M)=0$ for any $i\geq 1$ by
assumption, and hence $\Ext_{A}^{i}(T,L)=0$ for any $i\geq 1$. Continuing the similar process, we get the following
long exact sequence
$$\cdots \rightarrow T_{n} \xrightarrow {f_{n}} T_{n-1}\rightarrow \cdots \rightarrow T_{1} \xrightarrow {f_{1}} T_{0}
\xrightarrow {f_{0}}M \rightarrow 0\ \ \ \ (4.2)$$ with $T_{i}\in
\mathrm{add}T$ and $L_{i+1}=\Ker f_{i}\in {^{\bot_{0}}(\tau
T)}=\mathrm{Fac}T$ for $i\geq0$ and $\Ext_{A}^{j}(T,L_{i+1})=0$ for
$j\geq 1$ and $i\geq 0$.

Next we show that the exact sequence $0\rightarrow
L_{t+1}\rightarrow T_{t}\rightarrow L_{t}\rightarrow 0$ splits.

Since $\pd_{A}M= t<\infty$, then $\Ext_{A}^{t+1}(M,L_{t+1})=0$. On
the other hand, applying the functor $\Hom_{A}(-,L_{t+1})$ to the
sequence (4.2), one gets $0=\Ext_{A}^{t+1}(M,L_{t+1})\simeq
\Ext_{A}^{t}(L_1,L_{t+1})\simeq \cdots\Ext_{A}^{1}(L_t,L_{t+1})$
since $\Ext_{A}^{i}(T,M)=0$ and $L_i\in \mathrm{Fac} T$ hold for any
$i\geq1$. Hence we have a long exact sequence.

$$0\rightarrow T_{t} \xrightarrow {f_{t}} T_{t-1}\rightarrow \cdots
\rightarrow T_{1} \xrightarrow {f_{1}} T_{0} \xrightarrow {f_{0}}M
\rightarrow 0\ \ \ \ (4.3)$$

Applying the functor $\Hom_{A}(T,-)$ to the exact sequence (4.3), we
have
$$0\rightarrow \mathrm{Hom}_{A}(T,T_{t}) \rightarrow \mathrm{Hom}_{A}(T,T_{t-1})
\rightarrow \cdots \rightarrow \mathrm{Hom}_{A}(T,T_{1}) \rightarrow
\mathrm{Hom}_{A}(T,T_{0})
 \rightarrow \mathrm{Hom}_{A}(T,M) \rightarrow 0$$
and hence $\mathrm{pd}_{B}\mathrm{Hom}_{A}(T,M)\leq
t=\mathrm{pd}_{A}M$.
\end{proof}

For a module $T$ in $\mathrm{mod}A$, we denote by
$\mathrm{Sub}T=\{N|N\rightarrowtail T^n \ for\  some\  integer\
n\}$. Then we have the following on the injective dimensions.

\begin{theorem}\label{3.a} Let $A$ be an algebra, $T$ be a $\tau$-tilting module in $\mathrm{mod}A$
 and $C=\mathrm{End}_A \tau T ^{\rm op}$. For any $N\in \mathrm{Sub}\tau T$, we have
 \begin{enumerate}[\rm(1)]
\item   If $\id_{A} N\leq 1$ holds, then $\mathrm{pd}_{C}\mathrm{Hom}_{A}(N,\tau T)\leq \mathrm{id}_{A}N$
holds.
\item   If $\Ext_{A}^{i}(\tau T\oplus N,\tau T)=0$ holds
for any $i\geq 1$, then $\mathrm{pd}_{C}\mathrm{Hom}_{A}(N,\tau
T)\leq \mathrm{id}_{A}N$ holds.

\end{enumerate}
\end{theorem}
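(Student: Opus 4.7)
The plan is to dualize the proof of Theorem~\ref{3.2} throughout, swapping right $\mathrm{add}\,T$-approximations (Lemma~4.1(1)) for left $\mathrm{add}(\tau T)$-approximations (Lemma~4.1(2)), $\mathrm{Fac}\,T$ for $\mathrm{Sub}\,\tau T$, and projective for injective dimension. The key input, dual to the identification $\mathrm{Fac}\,T={}^{\bot_0}(\tau T)$ used in the proof of Theorem~\ref{3.2}, is the equality $T^{\bot_0}=\mathrm{Sub}\,\tau T$ for a $\tau$-tilting module $T$. Combined with the dual of Proposition~2.5(1) applied to $X=Y=\tau T$, which yields $\Ext^1_A(\mathrm{Sub}\,\tau T,\tau T)=0$ (since $\Hom_A(\tau^{-1}(\tau T),\tau T)=0$ by the $\tau$-rigidity of $T$), and the contravariant equivalence $\Hom_A(-,\tau T)\colon\mathrm{add}(\tau T)\to\mathrm{add}\,C$, this permits a completely dual construction.

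For part~(1), if $\id_A N=0$ then the embedding $N\hookrightarrow(\tau T)^n$ splits by injectivity of $N$, so $N\in\mathrm{add}(\tau T)$ and $\Hom_A(N,\tau T)$ is $C$-projective. If $\id_A N=1$, Lemma~4.1(2) produces an exact sequence $0\to N\to U_0\to Z\to 0$ with $U_0\in\mathrm{add}(\tau T)$ and $Z\in T^{\bot_0}=\mathrm{Sub}\,\tau T$, the left approximation being injective because $N$ is already cogenerated by $\tau T$. Mirroring Theorem~\ref{3.2}(1), I would show that $Z$ is Ext-injective in $\mathrm{Sub}\,\tau T$: for any $N'\in\mathrm{Sub}\,\tau T$ the long exact sequence $\Ext^1_A(N',U_0)\to\Ext^1_A(N',Z)\to\Ext^2_A(N',N)$ forces $\Ext^1_A(N',Z)=0$ by $\Ext^1_A(\mathrm{Sub}\,\tau T,\tau T)=0$ and $\id_A N\leq 1$. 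The $\tau$-tilting property of $T$ then forces Ext-injective objects of $\mathrm{Sub}\,\tau T$ to lie in $\mathrm{add}(\tau T)$, so $Z\in\mathrm{add}(\tau T)$, and applying $\Hom_A(-,\tau T)$ yields the desired length-$1$ projective resolution of $\Hom_A(N,\tau T)$.

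For part~(2), if $\id_A N=\infty$ there is nothing to prove; otherwise set $t=\id_A N$ and iteratively apply Lemma~4.1(2) to construct short exact sequences $0\to Z_j\to U_j\to Z_{j+1}\to 0$ with $Z_0=N$, $U_j\in\mathrm{add}(\tau T)$ and $Z_j\in\mathrm{Sub}\,\tau T$. An induction on $j$, using the hypothesis $\Ext^i_A(N\oplus\tau T,\tau T)=0$ for $i\geq 1$, shows $\Ext^i_A(Z_j,\tau T)=0$ for all $i,j\geq 1$. The critical step---which I expect to be the main obstacle---is to terminate the coresolution at step~$t$: applying $\Hom_A(Z_{t+1},-)$ to the sequences $0\to Z_j\to U_j\to Z_{j+1}\to 0$ and using $\Ext^i_A(Z_{t+1},U_j)=0$ for $i\geq 1$ (from $U_j\in\mathrm{add}(\tau T)$ and the just-established vanishing), one obtains
\[
\Ext^1_A(Z_{t+1},Z_t)\simeq\Ext^2_A(Z_{t+1},Z_{t-1})\simeq\cdots\simeq\Ext^{t+1}_A(Z_{t+1},N)=0,
\]
the last equality because $\id_A N=t$. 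Hence $0\to Z_t\to U_t\to Z_{t+1}\to 0$ splits and $Z_t\in\mathrm{add}(\tau T)$, giving a finite coresolution $0\to N\to U_0\to\cdots\to U_{t-1}\to Z_t\to 0$ in $\mathrm{add}(\tau T)$. The established Ext-vanishing keeps $\Hom_A(-,\tau T)$ exact on this sequence, producing a length-$t$ projective resolution of $\Hom_A(N,\tau T)$ over $C$, so $\pd_C\Hom_A(N,\tau T)\leq t=\id_A N$. Apart from this dimension shift, all remaining steps are direct dualizations of the corresponding arguments in the proof of Theorem~\ref{3.2}.
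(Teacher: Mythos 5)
Your proposal is correct and follows essentially the same route as the paper's proof. Both parts dualize Theorem~\ref{3.2} by using Lemma~4.1(2) to build a left $\mathrm{add}(\tau T)$-coresolution of $N$ with cosyzygies staying in $T^{\bot_0}=\mathrm{Sub}\,\tau T$, establish $\Ext^i_A(\text{cosyzygies},\tau T)=0$, truncate the coresolution at step $\id_A N$ via the chain of isomorphisms down to $\Ext^{\id_A N+1}_A(-,N)=0$, and apply the contravariant equivalence $\Hom_A(-,\tau T)\colon\mathrm{add}(\tau T)\to\mathrm{add}\,C$; the only differences from the paper are notational (your $Z_j$ versus the paper's $L_i=\Im f_i$, and your $t$ versus the paper's $s$), and you make explicit two small points the paper leaves implicit (that the left approximation of $N$ is monic because $N\in\mathrm{Sub}\,\tau T$, and that Ext-injective objects of $\mathrm{Sub}\,\tau T$ lie in $\mathrm{add}(\tau T)$ by the $\tau$-tilting property of $T$).
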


\begin{proof} Throughout the proof, we denote by $U=\tau T$.

(1) If $\id_{A} N=0$, then $N\in \mathrm{Sub}U$ implies $N\in
\mathrm{add}U$. One gets $\mathrm{Hom}_{A}(N,U)$ is a projective $C$-module
since $\mathrm{Hom}_{A}(-,U)$ induces a duality between $\mathrm{add}U$ and $\mathrm{add}C$.

Assume $\id_{A} N=1$. Since $N\in \mathrm{Sub}U$, by Lemma 4.1
we get a short exact sequence

$$0\rightarrow N\rightarrow U_{0} \rightarrow L \rightarrow0\ \ \ \ \ (4.4)$$

\noindent where $L \in T^{\bot_{0}}=\mathrm{Sub}U$. In the following
we show $L\in \mathrm{add} U$, that is, $\Ext_{A}^1(N',L)=0$ holds for any
$N'\in \mathrm{Sub} U$. Applying the functor $\Hom_A(N',-)$ to the
exact sequence (4.4), one gets the exact sequence
$\Ext_A^1(N',U)\rightarrow \Ext_A^1(N',L)\rightarrow\Ext_A^2(N',N)$.
The assertion follows from the facts $U$ is Ext-injective and $\id_A
N=1$.

(2) If $\id_A N=\infty$, then there is nothing to show. So we can
assume that $\id_A N=s<\infty$.

By Lemma 4.1 we get the following exact sequence
$$0\rightarrow N\xrightarrow{f_0}U_0\xrightarrow{f_1}U_1\cdots\xrightarrow{f_{s}}U_s\rightarrow\cdots\ \ \ \ \ (4.5)$$
with $f_i$ the minimal left $\mathrm{add} U$-approximation. Denote by
$L_i=\Im f_i$, then one gets $\Ext_{A}^k(L_i,U)=0$ for any $k\geq
1$ and $i\geq 0$.

In the following we show the exact sequence $0\rightarrow
L_s\rightarrow U_s\rightarrow L_{s+1}\rightarrow0$ splits. Applying
the functor $\Hom_A(L_{s+1},-)$ to the exact sequence (4.5), one
gets $0=\Ext_A^{s+1}(L_{s+1},N)\simeq
\Ext_A^{s}(L_{s+1},L_1)\simeq\cdots \simeq\Ext_A^{1}(L_{s+1},L_{s})
$ since $\id_A N\leq n$. Hence we have the following exact sequence

$$0\rightarrow N\xrightarrow{f_0}U_0\xrightarrow {f_1}\cdots\xrightarrow{f_{s}} U_s\rightarrow 0$$
Applying the functor $\Hom_A(-,U)$, one gets the assertion since
$\Ext_A^k(L_i,U)=0$ holds for any $k,i\geq 1$.
\end{proof}

At the end of this section, we give an example to show our main
results.
\begin{example}\label{3.5}
Let $A$ be the algebra given by the quiver $Q: \xymatrix{1\ar@<.2em>[r]^{\alpha_{1}}&2\ar@<.2em>[l]^{\beta_{2}}\ar@<.2em>[r]^{\alpha_{2}}&3\ar@<.2em>[l]^{\beta_{1}}}$ with relations $\alpha_{1}\beta_{2}=0$ and $\alpha_{2}\beta_{1}=\beta_{2}\alpha_{1}$. Then
 \begin{enumerate}[\rm(1)]
\item $A$ is an Auslander algebra and $T= {\begin{smallmatrix} 1\\ &2\\
&&3\end{smallmatrix}\oplus \begin{smallmatrix} &2\\ 1&&3\\
&2\\&&3 \end{smallmatrix} \oplus \begin{smallmatrix}  &2\\
1\\ \end{smallmatrix}}$ is a $\tau$-tilting module in $\mathrm{mod} A$.
\item   $B=\mathrm{End}_{A}T$ is given by the quiver $Q': \xymatrix{3\ar@<.2em>[r]^{\gamma_{3}}&2\ar@<.2em>[r]^{\gamma_{2}}&1\ar@<.2em>[l]^{\gamma_{1}}}$ with the relation $\gamma_{1}\gamma_{2}=0$ and $\mathrm{gl.dim}B=2$.
\item One can show $M={\begin{smallmatrix} 2\\ &3\end{smallmatrix}} \in \mathrm{Fac}T$ with $\mathrm{pd}_{A}M=1$, $\mathrm{Hom}_{A}(T, M)=S(2)$ in $\mathrm{mod}B$, and $\pd_{B} \mathrm{Hom}_{A}(T, M)\leq \pd_{A} M$.
\end{enumerate}
\end{example}

\noindent{\bf Acknowledgements} The first author wants to thank
Professor Zhaoyong Huang for supervision and continuous
encouragement. The third author would like to thank Professor Osamu
Iyama and Professor Bin Zhu for hospitality and useful discussion.
All authors would like to thank the referee for useful suggestions to improve this paper.

\end{document}